\tikzset{posetelm/.style={draw, fill, circle, minimum size=4pt, inner sep=0}}
\tikzset{posetelmm/.style={draw, thick, minimum size=5pt, inner sep=0}}
\tikzset{marking/.style={red}}
\tikzset{elmname/.style={blue}}
\tikzset{covrel/.style={thick}}
\declaretheorem[numberwithin=section]{theorem}
\declaretheorem[numbered=no]{theorem*}
\declaretheorem[sibling=theorem]{proposition}
\declaretheorem[sibling=theorem]{lemma}
\declaretheorem[sibling=theorem]{corollary}
\declaretheorem[style=definition,sibling=theorem]{definition}
\declaretheorem[style=definition,sibling=theorem]{remark}
\declaretheorem[style=definition,sibling=theorem,qed=$\lozenge$]{example}
\newcommand\R{\ensuremath{\mathbb R}}
\newcommand\C{\ensuremath{\mathbb C}}
\newcommand\Z{\ensuremath{\mathbb Z}}
\newcommand\OO{\ensuremath{\mathcal O}}
\newcommand\CC{\ensuremath{\mathcal C}}
\newcommand\FF{\ensuremath{\mathcal F}}
\DeclareMathOperator\rec{rec}
\DeclareMathOperator\im{im}
\DeclareMathOperator\aff{aff}
\DeclareMathOperator\relint{relint}
\newcommand\wt\widetilde
\newcommand\isoto{\xrightarrow{\raisebox{-2pt}[0pt][0pt]{$\mathmakebox[7pt]{\sim}$}}}
\begin{document}

\title[The Face Structure and Geometry of Marked Order Polyhedra]%
      {The Face Structure and Geometry of\\Marked Order Polyhedra}

\author{Christoph Pegel}
\address{Institut für Algebra, Zahlentheorie und Diskrete Mathematik \\
              Leibniz Universität Hannover\\
              Welfengarten 1\\
          30167 Hannover, Germany}
\email{pegel@math.uni-bremen.de}

\begin{abstract}
    We study a class of polyhedra associated to marked posets.
    Examples of these polyhedra are Gelfand--Tsetlin polytopes and cones, as well as Berenstein--Zelevinsky polytopes---all of which have appeared in the representation theory of semi-simple Lie algebras.
    The faces of these polyhedra correspond to certain partitions of the underlying poset and we give a combinatorial characterization of these partitions.
    We specify a class of marked posets that give rise to polyhedra with facets in correspondence to the covering relations of the poset.
    On the convex geometrical side, we describe the recession cone of the polyhedra, discuss products and give a Minkowski sum decomposition.
    We briefly discuss intersections with affine subspaces that have also appeared in representation theory and recently in the theory of finite Hilbert space frames.
\end{abstract}
\maketitle

\section{Introduction}

The mathematical developments leading to marked order polyhedra are split into two separate branches that just recently merged.

The first branch, started by Geissinger and Stanley in the 1980s, comes from order theory and combinatorial convex geometry.
Given a finite poset \(P\) with a global maximum and a global minimum, Geissinger studied the polytope \(\mathcal O(P)\) in \(\R^P\) consisting of order-preserving maps \(P\to\R\) sending the minimum to \(0\) and the maximum to \(1\) in \cite{Geissinger81}.
He found that vertices of this polytope correspond to non-trivial order ideals of \(P\) and more generally, that faces correspond to residually acyclic partitions of \(P\).
Geissinger also describes how the volume of \(\OO(P)\) is obtained from the number of linear extensions of \(P\).
These results reappear in \cite{Stanley86}, where Stanley called \(\OO(P)\) the \emph{order polytope} associated to \(P\) and introduced a second polytope, the \emph{chain polytope} \(\CC(P)\), with inequalities given by saturated chains in \(P\).
He introduced a piecewise linear \emph{transfer map} \(\OO(P)\to\CC(P)\) that yields an Ehrhart equivalence of the polytopes.
In the same spirit of comparing these two polytopes associated to a finite poset, a group around Hibi and Li obtained results on unimodular equivalence and a bijection between edge sets in \cite{HL12} and \cite{HLSS15}, respectively.

A second branch begins in the 1950s in representation theory of semi-simple complex Lie algebras, when Gelfand and Tsetlin introduced number patterns---soon attributed to them as \emph{Gelfand--Tsetlin patterns}---to enumerate the elements in certain bases of irreducible representations in \cite{GT50}.
Given a fixed dominant integral weight \(\lambda\) for the general linear Lie algebra \(\mathfrak{gl}_n(\C)\), the corresponding irreducible representation \(V(\lambda)\) has a basis enumerated by integral Gelfand--Tsetlin patterns.
The defining conditions of these patterns give rise to the \emph{Gelfand--Tsetlin polytope} associated to the dominant weight, so that the elements in the Gelfand--Tsetlin basis correspond to the lattice points in the Gelfand--Tsetlin polytope.
In \cite{GKT13} and \cite{ACK16} the authors used methods from enumerative combinatorics to study the number of vertices and the \(f\)-vector of Gelfand--Tsetlin polytopes, respectively.

Given another weight \(\mu\) of the representation \(V(\lambda)\), one can add certain linear conditions to the description of the Gelfand--Tsetlin polytope to obtain a different polytope, whose integral points enumerate a basis of the weight \(\mu\) subspace of the irreducible representation \(V(\lambda)\).
These polytopes have been studied by De~Loera and McAllister in \cite{DM04}, where they give a procedure to calculate the dimension of minimal faces corresponding to points in the polytope by using a method based on tiling matrices, very similar to the approach we take in this article.
Similar techniques have been used in \cite{Alexandersson16} to study integral points of Gelfand--Tsetlin polytopes.

These Gelfand--Tsetlin polytopes with additional linear conditions recently appeared the theory of finite Hilbert space frames in \cite{CFM11} as \emph{polytopes of eigensteps}.
In this setting they have been used to find parametrizations of frame varieties.
In a special case the authors of \cite{HP16} gave a non-redundant description of the polytope in terms of linear equations and inequalities, hence determining the dimension and number of facets of the polytope.

The branches started to merge in 2011, when Ardila, Bliem and Salazar generalized order and chain polytopes to \emph{marked order and chain polytopes} in \cite{ABS11}, allowing marking conditions other than just sending minima to \(0\) and maxima to \(1\).
That is, given a finite poset \(P\) and a \emph{marking} \(\lambda\colon P^*\to \R\) of a subset \(P^*\subseteq P\) containing all extremal elements, they defined the marked order polytope \(\OO(P,\lambda)\) to be the set of all order-preserving extensions of \(\lambda\) to all of \(P\) and generalized the marked chain polytope \(\CC(P,\lambda)\) accordingly.
This generalization allowed them to consider Gelfand--Tsetlin, Feigin--Fourier--Littelmann--Vinberg and Berenstein--Zelevinsky polytopes---all of which have appeared in representation theory---as marked poset polytopes.
They showed that the transfer map introduced by Stanley generalizes and still yields an Ehrhart equivalence \(\OO(P,\lambda)\to\CC(P,\lambda)\) in the marked case.
In \cite{JS14} the authors study the number of lattice points in \(\OO(P,\lambda)\) when varying the values of \(\lambda\).
They find that this number is piecewise polynomial in the values of \(\lambda\) and use the generalized transfer map to obtain the same result for marked chain polytopes.
They also characterize the faces of marked order polytopes by certain partitions of the underlying poset in the spirit of the original work by Geissinger and Stanley.
However, the characterizations given in \cite[Propositions~2.2,~2.3]{JS14} are incorrect, since the mentioned conditions on the partitions are too weak.
We state the correct characterization in \Cref{thm:facepartitions}.
In \cite{Fourier16} an attempt has been made to define a class of \emph{regular} marked posets, where the facets of the associated marked order polytope are in correspondence with the covering relations of the posets, as is true in the unmarked case.
However, the procedure in \cite[Section~3]{Fourier16} does not remove all redundant covering relations.
We give a corrected definition of regular marked posets in \Cref{def:regular}.
Marked order and chain polytopes have been generalized in \cite{FF16} to an Ehrhart equivalent family of \emph{marked chain-order polytopes}, having marked order and marked chain polytopes as extremal cases.
\medskip

In this article, we restrict our study to a potentially unbounded generalization of marked order polytopes.
We start by defining marked posets and their associated marked order polyhedra in \Cref{sec:mop}, describing different ways to look at the concept from an order theoretic, a convex geometric and a categorical point of view.
We then study the face structure of marked order polyhedra in \Cref{sec:mop-faces} and give a complete combinatorial characterization of partitions of the underlying poset corresponding to faces of the polyhedron.
We specialize this characterization to facets and show that regular marked posets have facets in correspondence with all covering relations of the poset.
In \Cref{sec:mop-geometry}, we focus on convex geometrical properties of marked order polyhedra.
We describe the recession cone of the polyhedra, how disjoint unions of posets correspond to products of polyhedra and give a Minkowski sum decomposition.
Furthermore, we show that marked order polyhedra with integral markings are always lattice polyhedra.
We close by adding linear conditions to marked order polyhedra in \Cref{sec:cmop}, generalizing a result on dimensions of faces obtained in \cite{DM04} for weighted Gelfand--Tsetlin polytopes to these conditional marked order polyhedra.
Throughout the article we give examples to illuminate the obtained results.

\section{Marked Posets and their Associated Polyhedra}
\label{sec:mop}

\begin{definition}
    A \emph{marked poset} \((P,\lambda)\) is a finite poset \(P\) together with an induced subposet \(P^*\subseteq P\) of \emph{marked elements} and an order-preserving \emph{marking} \(\lambda\colon P^*\to\R\).
    The marking \(\lambda\) is called \emph{strict} if \(\lambda(a)<\lambda(b)\) whenever \(a<b\).
    A map \(f\colon (P,\lambda)\to (P',\lambda')\) between marked posets is an order-preserving map \(f\colon P\to P'\) such that \(f(P^*)\subseteq (P')^*\) and \(\lambda'(f(a)) = \lambda(a)\) for all \(a\in P^*\).
\end{definition}

When talking about a poset \(P\) we will always denote its partial order by \(\le\) and covering relations by \(\prec\).
That is, for \(p,q\in P\) we write \(p \prec q\) to indicate that \(p<q\) and \(p\le s\le q\) implies \(s=p\) or \(s=q\).

To study marked posets and the polyhedra we will associate to them, it is sometimes useful to take a more categorically minded point of view on marked posets.
From the definition above we see that marked posets form a category \(\mathsf{MPos}\).
Letting \(\mathsf{Pos}\) denote the category of posets and order-preserving maps, we can describe \(\mathsf{MPos}\) as a category of certain diagrams in \(\mathsf{Pos}\).
A marked poset \((P,\lambda)\) is a diagram
\begin{equation*}
    \begin{tikzcd}
        P & P^* \arrow[r,"\lambda"] \arrow[l, hook'] & \R
    \end{tikzcd}
\end{equation*}
in \(\mathsf{Pos}\), where \(P^*\hookrightarrow P\) is the inclusion of an induced subposet \(P^*\) in a finite poset \(P\).
A map \(f\colon (P,\lambda)\to (P',\lambda')\) is a commutative diagram
\begin{equation*}
    \begin{tikzcd}
        P \arrow[d,"f"] & P^* \arrow[r,"\lambda"] \arrow[l, hook'] \arrow[d, "f|_{P^*}"] & \R \arrow[d,equal]\\
        P' & P'^* \arrow[r,"\lambda'"] \arrow[l, hook'] & \R\makebox[0pt][l]{.}
    \end{tikzcd}
\end{equation*}

To each marked poset \((P,\lambda)\) we associate a polyhedron \(\OO(P,\lambda)\) in \(\R^P\).

\begin{definition} \label{def:mop}
    Let \((P,\lambda)\) be a marked poset.
    The \emph{marked order polyhedron} \(\OO(P,\lambda)\) associated to \((P,\lambda)\) is the set of all \(x\in\R^P\) such that \(x_p\le x_q\) for all \(p,q\in P\) with \(p\le q\) and \(x_a=\lambda(a)\) for all \(a\in P^*\).
\end{definition}

Since the coordinates in \(P^*\) are fixed, denote by $\tilde P=P\setminus P^*$ the remaining coordinates and let \(\wt\OO(P,\lambda)\) be the affinely isomorphic projection of \(\OO(P,\lambda)\) to \(\R^{\tilde P}\).

When \(P^*\) contains all extremal elements of \(P\), the polyhedron \(\OO(P,\lambda)\) is bounded.
In this case \(\wt\OO(P,\lambda)\) is the marked order polytope associated to \((P,\lambda)\) as in \cite{ABS11}.

In more geometric terms, this definition is equivalent to
\begin{equation*}
    \OO(P,\lambda) = \bigcap_{p<q} H_{p<q}^+ \cap \bigcap_{a\in P^*} H_a,
\end{equation*}
where \(H_{p<q}^+\) is the half-space in \(\R^P\) defined by \(x_p\le x_q\) and \(H_a\) is the hyperplane defined by \(x_a=\lambda(a)\).

An interval \([a,b]\) in a marked poset \((P,\lambda)\) is called \emph{constant} if \(a,b\in P^*\) and \(\lambda(a)=\lambda(b)\).
In this case \(x_p=\lambda(a)\) for all \(x\in\OO(P,\lambda)\) and \(p\in[a,b]\).
With this terminology, a marking \(\lambda\) is strict if and only if \((P,\lambda)\) contains no non-trivial constant intervals.

We can also think of the marked order polyhedron \(\OO(P,\lambda)\) as the set of all extensions of \(\lambda\) to order-preserving maps \(x\colon P\to\R\) with \(x|_{P^*} = \lambda\).
That is, the set of all poset maps \(x\colon P\to\R\) such that the diagram
\begin{equation*}
    \begin{tikzcd}
        P \arrow[rr, bend right, "x"] & P^* \arrow[r,"\lambda"] \arrow[l, hook'] & \R
    \end{tikzcd}
\end{equation*}
commutes.
Putting together the diagram of a map \(f\colon (P,\lambda)\to (P',\lambda')\) between marked posets and that of a point \(x\in\OO(P',\lambda')\), we see that we obtain a point \(f^*(x)\) in \(\OO(P,\lambda)\) given by \(f^*(x) = x\circ f\):
\begin{equation*}
     \begin{tikzcd}
        P \arrow[d,"f"] & P^* \arrow[r,"\lambda"] \arrow[l, hook'] \arrow[d, "f|_{P^*}"] & \R \arrow[d,equal]\\
        P' \arrow[rr, bend right, "x"] & P'^* \arrow[r,"\lambda'"] \arrow[l, hook'] & \R\makebox[0pt][l]{.}
    \end{tikzcd}
\end{equation*}
Hence, letting \(\mathsf{Polyh}\) denote the category of polyhedra and affine maps, we have a contravariant functor \(\OO\colon\mathsf{MPos}\to\mathsf{Polyh}\) sending a marked poset \((P,\lambda)\) to the marked order polyhedron \(\OO(P,\lambda)\) and a map \(f\) between marked posets to the induced map \(f^*\) described above.

As we will see in the next proposition, any marking \(\lambda\) can be extended to \(P\) and any strict marking can be extended to a strictly order-preserving map \(P\to\R\).

\begin{proposition} \label{prop:moppoint}
    Let \((P,\lambda)\) be a marked poset.
    The associated marked order polyhedron is non-empty and if \(\lambda\) is strict, there is a point \(x\in \OO(P,\lambda)\) such that \(x_p < x_q\) whenever \(p<q\).
\end{proposition}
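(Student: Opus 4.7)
The plan is to give two explicit constructions, one for each claim in the proposition.

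For non-emptiness, I would exhibit a point by setting, for each $p\in P$,
\[
    x_p = \max\bigl(\{M\}\cup\{\lambda(a) : a\in P^*,\, a\le p\}\bigr),
\]
where $M$ is a real number $\le \min_{a\in P^*}\lambda(a)$ (or $M=0$ when $P^*$ is empty). That $x_a=\lambda(a)$ on $P^*$ follows because $\lambda$ is order-preserving on $P^*$ and so the maximum is attained at $a$ itself; that $x_p\le x_q$ whenever $p\le q$ follows from the inclusion between the two sets over which the maxima are taken.

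For the strict case, I would construct $x$ by induction along a linear extension $p_1,\dots,p_n$ of $P$. At stage $i$, I set $x_{p_i}=\lambda(p_i)$ whenever $p_i\in P^*$, and otherwise I pick $x_{p_i}$ in the open interval $(L_i,U_i)$, where
\[
    L_i = \max\{x_{p_j} : j<i,\ p_j<p_i\},\qquad U_i = \min\{\lambda(b) : b\in P^*,\ p_i<b\},
\]
with the conventions $\max\emptyset=-\infty$ and $\min\emptyset=+\infty$. The key inductive invariant I maintain is that for every $j\le i$ with $p_j\notin P^*$, we have $x_{p_j}<\lambda(b)$ for all $b\in P^*$ with $p_j<b$; combined with the linear-extension property, this propagates strict inequality along every order relation processed so far.

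The main obstacle is ensuring $L_i<U_i$ at each stage, which is exactly where the strictness of $\lambda$ enters. If $L_i$ is finite, it is attained at some $p_j$ with $p_j<p_i$ and $j<i$; for any $b\in P^*$ with $p_i<b$ one then has $p_j<b$. If $p_j\in P^*$, strictness of $\lambda$ applied to $p_j<b$ yields $\lambda(p_j)<\lambda(b)$; if $p_j\notin P^*$, the inductive invariant yields $x_{p_j}<\lambda(b)$. Taking the minimum over such $b$ gives $L_i<U_i$. Once the inductive step is justified, the resulting $x$ lies in $\OO(P,\lambda)$ and satisfies $x_p<x_q$ for every $p<q$ by construction.
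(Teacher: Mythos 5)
Your proof is correct and follows essentially the same route as the paper, which also extends \(\lambda\) element by element using that \(\R\) is dense and unbounded; your linear-extension induction with the explicit bounds \(L_i<U_i\) is just a more detailed account of that successive extension. The closed-form \(\max\)-construction for non-emptiness is a self-contained alternative to extending successively, but the substance is the same.
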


\begin{proof}
    The order on \(\R\) is dense and unbounded.
    Hence, whenever \(a<c\) in \(\R\) there is a \(b\in\R\) such that \(a<b<c\) and for any \(b\in\R\) there are \(a,c\in\R\) such that \(a<b<c\).
    Since \(P\) is finite this allows us to successively extend \(\lambda\) to an order-preserving map on \(P\).
    In fact, we can find an order-preserving extension \(x\) of \(\lambda\) such that for \(p<q\) we have \(x_p=x_q\) if and only if there are \(a,b\in P^*\) such that \(a\le p<q\le b\) and \(\lambda(a)=\lambda(b)\).
    In particular, when \(\lambda\) was strict we can always find a strictly order-preserving extension.
\end{proof}

\begin{example} \label{ex:pentagon}
    We consider the marked order polytope given by the marked poset \((P,\lambda)\) in \Cref{subfig:pentagon-a}.
    The blue labels name elements in \(P\), while the red labels correspond to values of the elements of \(P^*\) under the marking \(\lambda\).
    The (projected) associated marked order polytope \(\wt\OO(P,\lambda)\) is shown in \Cref{subfig:pentagon-b}.
    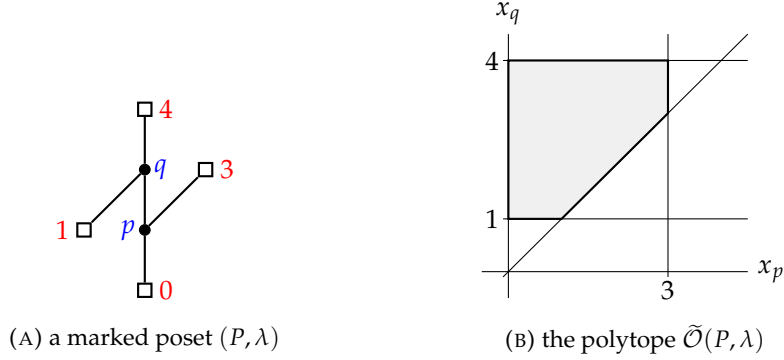
\begin{figure}
        \centering
        \subcaptionbox[]{a marked poset \((P,\lambda)\)\label{subfig:pentagon-a}}[0.49\textwidth][c]{\centering
        \begin{tikzpicture}[baseline={([yshift=-.5ex]current bounding box.center)},scale=0.8]
            \path (0,0) node[posetelmm] (0) {} node[right=2pt,marking] {\(0\)};
            \path (0,1) node[posetelm] (x) {} node[left,elmname] {\(p\)};
            \path (0,2) node[posetelm] (y) {} node[right,elmname] {\(q\)};
            \path (0,3) node[posetelmm] (4) {} node[right=2pt,marking] {\(4\)};
            \path (-1,1) node[posetelmm] (1) {} node[left=2pt,marking] {\(1\)};
            \path (1,2) node[posetelmm] (3) {} node[right=2pt,marking] {\(3\)};
            \draw[covrel]
                  (0) -- (x) -- (y) -- (4)
                  (1) -- (y)
                  (x) -- (3);
        \end{tikzpicture}}
        \hfill
        \subcaptionbox[]{the polytope \(\wt\OO(P,\lambda)\)\label{subfig:pentagon-b}}[0.49\textwidth][c]{\centering
        \begin{tikzpicture}[scale=.7,baseline={([yshift=-.5ex]current bounding box.center)}]
            \draw (-.5,0) -- (4.5,0) node [right] {\(x_p\)};
            \draw (0,-.5) -- (0,4.5) node [above] {\(x_q\)};
            \draw (-.1,4) -- (4.5,4);
            \draw (-.1,1) -- (4.5,1);
            \draw (3,-.1) -- (3,4.5);
            \draw (-.1,-.1) -- (4.5,4.5);
            \fill[black!6,draw=black,thick] (0,1) -- (1,1) -- (3,3) -- (3,4) -- (0,4) -- cycle;
            \draw (3,0) node[below] {\(3\)};
            \draw (0,4) node[left] {\(4\)};
            \draw (0,1) node[left] {\(1\)};
        \end{tikzpicture}}
        \caption[Marked poset and associated polytope from \Cref{ex:pentagon}]{The marked poset \((P,\lambda)\) from \Cref{ex:pentagon} and the associated marked order polytope \(\wt\OO(P,\lambda)\).}
        \label{fig:pentagon}
    \end{figure}
\end{example}

\section{Face Structure and Facets}
\label{sec:mop-faces}

In this section, we study the face structure of \(\OO(P,\lambda)\).
As it turns out, the faces of marked order polyhedra correspond to certain partitions of the underlying poset \(P\).
Our goal is to characterize those partitions combinatorially.
We associate to each point \(x\) in \(\OO(P,\lambda)\) a partition \(\pi_x\) of \(P\), that will suffice to describe the minimal face of \(\OO(P,\lambda)\) containing \(x\).
The partitions that are obtained in this way from points of the polyhedron will then---ordered by refinement---capture the polyhedrons face structure.

\begin{definition}
    Let \(Q=\OO(P,\lambda)\) be a marked order polyhedron.
    To each \(x\in Q\) we associate a partition \(\pi_x\) of \(P\) induced by the transitive closure of the relation
    \begin{equation*}
        p \sim_x q \quad\text{if}\quad \text{\(x_p=x_q\) and \(p,q\) are comparable.}
    \end{equation*}
    We may think of \(\pi\) as being obtained by first partitioning \(P\) into blocks of constant values under \(x\) and then splitting those blocks into connected components with respect to the Hasse diagram of \(P\).

    Given any partition \(\pi\) of \(P\), we call a block \(B\in\pi\) \emph{free} if \(P^*\cap B = \varnothing\) and denote by \(\tilde\pi\) the set of all free blocks of \(\pi\).
    Note that any \(x\in\OO(P,\lambda)\) is constant on the blocks of \(\pi_x\) and the values on the non-free blocks of \(\pi_x\) are determined by \(\lambda\).
\end{definition}

Let \(x\in Q\) be a point of a polyhedron.
We denote the minimal face of \(Q\) containing \(x\) by \(F_x\).
Hence, \(F_x\) is the unique face having \(x\) in its relative interior.
Equivalently, \(F_x\) is the intersection of all faces of \(Q\) containing \(x\).

\begin{proposition} \label{prop:mopface}
    Let \(x\in Q=\OO(P,\lambda)\) be a point of a marked order polyhedron with associated partition \(\pi=\pi_x\).
    We have
    \begin{equation*}
        F_x = \left\{ \, y\in Q : \text{\(y\) is constant on the blocks of \(\pi\)} \,\right\}
    \end{equation*}
    and \(\dim F_x = |\tilde\pi|\).
\end{proposition}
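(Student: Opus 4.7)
The plan is to use the standard characterization of the minimal face of a polyhedron containing a given point: $F_x$ consists of all $y \in Q$ satisfying with equality every defining inequality of $Q$ that is tight at $x$. The defining (in)equalities of $Q \subseteq \R^P$ are $y_p \le y_q$ for $p \le q$ together with $y_a = \lambda(a)$ for $a \in P^*$. The marking equalities are tight for every $y \in Q$, while $y_p \le y_q$ is tight at $x$ precisely when $x_p = x_q$, which for a comparable pair means $p \sim_x q$. Thus
\begin{equation*}
F_x = \{\, y \in Q : y_p = y_q \text{ for all comparable } p, q \text{ with } x_p = x_q \,\}.
\end{equation*}
Since equality is transitive, the system $y_p = y_q$ for all $p \sim_x q$ is equivalent to $y$ being constant on the equivalence classes of the transitive closure of $\sim_x$, that is, on the blocks of $\pi = \pi_x$; this yields the claimed description of $F_x$.

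For the upper bound $\dim F_x \le |\tilde\pi|$, I would observe that any $y \in F_x$ is constant on each block of $\pi$, with the constant value on a non-free block forced by $\lambda$ (internal consistency of this condition being guaranteed by the existence of $x \in F_x$) and the value on each free block a single real parameter. Hence the affine hull of $F_x$ lies in an affine subspace of dimension $|\tilde\pi|$ parametrized by the values on the free blocks.

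For the matching lower bound I would exhibit $|\tilde\pi|$ independent directions of perturbation inside $F_x$. For each free block $B \in \tilde\pi$, let $e_B \in \R^P$ denote its characteristic vector; the claim is that $x + t \cdot e_B \in F_x$ for all sufficiently small $|t|$. The marking conditions survive because $B$ contains no marked elements, $y$ remains constant on the blocks of $\pi$ since only one block is moved uniformly, and the only order inequalities $y_p \le y_q$ that could fail are those with exactly one of $p, q$ in $B$, which necessarily cross different blocks of $\pi$. Here the key observation is that if $p \le q$ lie in different blocks of $\pi$ then $x_p < x_q$ strictly---otherwise $p \sim_x q$ would put them in the same block---so these inequalities are preserved for small $|t|$. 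The vectors $e_B$ are manifestly linearly independent, yielding $\dim F_x \ge |\tilde\pi|$. The main conceptual subtlety throughout is identifying the minimal face via the direct relation $\sim_x$ and then recognizing that passing to its transitive closure introduces no new equations thanks to transitivity of equality; once this is in hand, the strict separation between distinct blocks that powers the lower bound follows directly from the definition of $\pi_x$.
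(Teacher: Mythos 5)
Your proof is correct and follows essentially the same route as the paper: both identify \(F_x\) via the inequalities \(y_p\le y_q\) that are tight at \(x\), observe that transitivity of equality lets one pass to the blocks of \(\pi_x\), and obtain the upper bound \(\dim F_x\le|\tilde\pi|\) by parametrizing by values on free blocks. The only difference is that you establish the matching lower bound by exhibiting the perturbation directions \(e_B\) (using that \(p\le q\) in distinct blocks forces \(x_p<x_q\)), whereas the paper directly asserts that \(\aff(F_x)\) equals the intersection of the tight hyperplanes with \(\aff(Q)\) — a standard fact about minimal faces that your perturbation argument in effect re-proves, so your write-up is, if anything, slightly more self-contained.
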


\begin{proof}
    For \(p<q\) in \(P\) let \(H_{p<q}=\partial H_{p<q}^+\) be the hyperplane defined by \(x_p=x_q\) in \(\R^P\).
    The minimal face of a point \(x\in Q\) is then given by
    \begin{equation*}
        F_x = Q \ \cap\  \smashoperator{\bigcap_{\substack{p<q,\\x_p=x_q}}} H_{p<q}.
    \end{equation*}
    A point \(y\in Q\) satisfies \(y_p=y_q\) for all \(p<q\) with \(x_p=x_q\) if and only if \(y\) is constant on the blocks of \(\pi_x\).
    Thus, \(F_x\) is indeed given by all \(y\in Q\) constant on the blocks of \(\pi_x\).

    To determine the dimension of \(F_x\), we consider its affine hull \(\aff(F_x)\).
    It is obtained by intersecting the affine hull of \(Q\) with all \(H_{p<q}\) such that \(x_p=x_q\).
    The affine hull of \(Q\) itself is the intersection of all \(H_a\) for \(a\in P^*\) and all \(H_{p<q}\) such that \(y_p=y_q\) for all \(y\in Q\).
    Putting these facts together, we have
    \begin{equation*}
        \aff(F_x) = \bigcap_{a\in P^*} H_a \enskip\cap\enskip \smashoperator{\bigcap_{\substack{p<q,\\y_p=y_q \forall y\in Q}}} H_{p<q} \enskip\cap\enskip \smashoperator{\bigcap_{\substack{p<q,\\x_p=x_q}}} H_{p<q}
        = \bigcap_{a\in P^*} H_a \enskip\cap\enskip \smashoperator{\bigcap_{\substack{p<q,\\x_p=x_q}}} H_{p<q}.
    \end{equation*}
    This is exactly the set of all \(y\) constant on the blocks of \(\pi_x\) and satisfying \(y_a=\lambda(a)\) for all \(a\in P^*\).
    Such \(y\) are uniquely determined by values on the free blocks of \(\pi_x\) and thus \(\dim(F_x)=|\tilde\pi_x|\) as desired.
\end{proof}

\begin{corollary}
    If \(\lambda\) is a strict marking on \(P\), the dimension of \(\OO(P,\lambda)\) is equal to the number of unmarked elements in \(P\).
\end{corollary}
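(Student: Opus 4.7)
The plan is to produce a single point $x\in\OO(P,\lambda)$ whose associated partition $\pi_x$ consists entirely of singletons, and then read off the dimension from \Cref{prop:mopface}.

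Since $\lambda$ is strict, \Cref{prop:moppoint} supplies an $x\in\OO(P,\lambda)$ with $x_p<x_q$ whenever $p<q$. For such $x$, the generating relation $p\sim_x q$ (which requires $x_p=x_q$ for comparable $p,q$) holds only in the trivial case $p=q$. Therefore its transitive closure is the identity relation, and the partition $\pi_x$ consists of the singleton blocks $\{p\}$ for $p\in P$. A singleton $\{p\}$ is free exactly when $p\notin P^*$, so $|\tilde\pi_x|$ equals $|P\setminus P^*|$, the number of unmarked elements.

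By \Cref{prop:mopface} we get $\dim F_x=|\tilde\pi_x|=|P\setminus P^*|$. Since $F_x\subseteq\OO(P,\lambda)$, this yields $\dim\OO(P,\lambda)\ge|P\setminus P^*|$. For the matching upper bound, note that $\OO(P,\lambda)\subseteq\bigcap_{a\in P^*}H_a$, and this affine subspace has dimension $|P|-|P^*|=|P\setminus P^*|$ because the hyperplanes $H_a$ fix independent coordinates. Combining the two bounds gives the claim.

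There is no real obstacle: the statement is an immediate consequence of chaining \Cref{prop:moppoint} (which provides the strictly order-preserving extension and thus the trivial partition) with \Cref{prop:mopface} (which translates the trivial partition into the desired dimension count). The only point requiring a moment's care is the upper bound, which is handled by the ambient affine subspace argument above.
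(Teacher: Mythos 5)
Your proof is correct and follows essentially the same route as the paper: both construct the strictly order-preserving point via \Cref{prop:moppoint}, observe that its partition is the partition into singletons so that \Cref{prop:mopface} gives a face of dimension \(|\tilde P|\), and pair this with the trivial upper bound coming from the coordinates fixed on \(P^*\). No gaps.
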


\begin{proof}
    Since all coordinates in \(P^*\) are fixed by \(\lambda\), we always have \(\dim\OO(P,\lambda)\le |\tilde P|\).
    If \(\lambda\) is strict, there is a point \(x\in\OO(P,\lambda)\) such that \(x_p<x_q\) whenever \(p<q\) by \Cref{prop:moppoint}.
    Hence, \(\pi_x\) is the partition of \(P\) into singletons and \(\dim F_x = |\tilde\pi_x| = |\tilde P|\).
    We conclude that \(F_x=\OO(P,\lambda)\), so \(x\) is a relative interior point and the marked order polyhedron has the desired dimension.
\end{proof}

\begin{corollary}
    Let \(x\in Q=\OO(P,\lambda)\) be a point of a marked order polyhedron.
    For \(y\in Q\) we have \(y\in F_x\) if and only if \(\pi_x\) is a refinement of \(\pi_y\).
\end{corollary}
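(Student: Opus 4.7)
The plan is to reduce this statement to \Cref{prop:mopface}, which already gives the characterization $F_x = \{y\in Q : y \text{ is constant on the blocks of } \pi_x\}$. Thus it suffices to prove that for $y\in Q$, the point $y$ is constant on the blocks of $\pi_x$ if and only if $\pi_x$ refines $\pi_y$.

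For the easier direction, I would assume $\pi_x$ refines $\pi_y$. Each block $B$ of $\pi_x$ is contained in some block $B'$ of $\pi_y$. Since $\pi_y$ is generated by the relation $p\sim_y q$ with $y_p=y_q$, chaining along a sequence of such comparabilities inside $B'$ shows that $y$ is constant on $B'$, and therefore on $B$ as well.

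For the reverse direction, I would assume $y$ is constant on every block of $\pi_x$ and pick $p,q$ lying in the same block of $\pi_x$. By definition of $\pi_x$ as the transitive closure of $\sim_x$, there is a chain $p=p_0,p_1,\dots,p_n=q$ where consecutive elements $p_i,p_{i+1}$ are comparable in $P$ and satisfy $x_{p_i}=x_{p_{i+1}}$. Every $p_i$ lies in the same block of $\pi_x$, so by the hypothesis $y_{p_i}=y_{p_{i+1}}$; together with comparability this yields $p_i\sim_y p_{i+1}$. Taking the transitive closure places $p$ and $q$ in the same block of $\pi_y$, proving that $\pi_x$ refines $\pi_y$.

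There is no real obstacle here: the content lies entirely in \Cref{prop:mopface}, and what remains is a routine check that constancy on $\pi_x$-blocks is equivalent to the refinement condition, which follows immediately by unwinding the definition of $\pi_y$ as the transitive closure of the relation $\sim_y$.
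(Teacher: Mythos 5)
Your proof is correct and follows essentially the same route as the paper: reduce via \Cref{prop:mopface} to the equivalence between constancy on the blocks of \(\pi_x\) and the refinement condition, then unwind the transitive closure defining the partitions in each direction. The paper phrases the forward direction in terms of the blocks of \(\pi_x\) being connected in the Hasse diagram rather than explicitly exhibiting a chain, but this is the same argument.
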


\begin{proof}
    By \Cref{prop:mopface}, \(y\in F_x\) if and only if \(y\) is constant on the blocks of \(\pi_x\).
    Let \(y\) be constant on the blocks of \(\pi_x\).
    Any block \(B\) of \(\pi_x\) is connected with respect to the Hasse diagram of \(P\) and \(y\) takes constant values on \(B\), hence \(B\) is contained in a block of \(\pi_y\) by construction and \(\pi_x\) is a refinement of \(\pi_y\).
    Now let \(y\in Q\) with \(\pi_x\) being a refinement of \(\pi_y\).
    We conclude that \(y\) is constant on the blocks of \(\pi_x\), since it is constant on the blocks of \(\pi_y\) and \(\pi_x\) is a refinement of \(\pi_y\).
\end{proof}

\begin{corollary}
    Given any two points \(x,y\in\OO(P,\lambda)\), we have \(F_y\subseteq F_x\) if and only if \(\pi_x\) is a refinement of \(\pi_y\).
    In particular \(F_y=F_x\) if and only if \(\pi_y=\pi_x\). \qed
\end{corollary}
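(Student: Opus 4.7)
The plan is to derive this corollary essentially as a formal consequence of the previous corollary, which characterizes membership of a point in the minimal face of another point in terms of refinement of the associated partitions.

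For the forward direction, suppose $F_y\subseteq F_x$. Since $y$ lies in the relative interior of $F_y$, in particular $y\in F_y\subseteq F_x$, so by the previous corollary $\pi_x$ is a refinement of $\pi_y$. For the converse, suppose $\pi_x$ is a refinement of $\pi_y$. By the previous corollary this means $y\in F_x$, so $F_x$ is a face of $Q$ containing $y$. Since $F_y$ is the \emph{minimal} face with this property, $F_y\subseteq F_x$.

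The "in particular" statement follows by applying the equivalence in both directions: $F_y=F_x$ is equivalent to having both $F_y\subseteq F_x$ and $F_x\subseteq F_y$, which translates into $\pi_x$ being a refinement of $\pi_y$ and vice versa; since partitions of a set form a poset under refinement, this forces $\pi_x=\pi_y$.

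There is no real obstacle here—the content of the corollary has already been packaged into the preceding corollary, and the only additional ingredient is the defining property that $F_y$ is the minimal face containing $y$ (equivalently, that $y$ lies in its relative interior). The proof is therefore a short two-line deduction in each direction.
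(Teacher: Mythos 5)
Your proof is correct and is exactly the deduction the paper leaves implicit (the corollary is stated with no written proof): combine the preceding corollary's characterization of $y\in F_x$ with the minimality of $F_y$ among faces containing $y$, and antisymmetry of refinement for the equality statement. Nothing to add.
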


Hence, the partition of \(\OO(P,\lambda)\) into relative interiors of its faces is the same as the partition given by \(x\sim y\) if \(\pi_x=\pi_y\) and we can associate to each non-empty face \(F\) a partition \(\pi_F\) with \(\pi_F=\pi_x\) for any \(x\) in the relative interior of \(F\).
We call a partition \(\pi\) of \(P\) a \emph{face partition} of \((P,\lambda)\) if \(\pi=\pi_F\) for some non-empty face of \(\OO(P,\lambda)\).
We arrive at the following description of face lattices of marked order polyhedra.

\begin{corollary}
    Let \(Q=\OO(P,\lambda)\) be a marked order polyhedron.
    The poset \(\FF(Q)\setminus\{\varnothing\}\) of non-empty faces of \(Q\) is isomorphic to the induced subposet of the partition lattice on \(P\) given by all face partitions of \((P,\lambda)\). \qed
\end{corollary}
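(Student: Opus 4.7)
My plan is to exhibit the bijection $\Phi\colon \FF(Q)\setminus\{\varnothing\} \to \{\text{face partitions of }(P,\lambda)\}$ given by $F\mapsto \pi_F$ and observe that all the work has already been done in the preceding corollaries. The map $\Phi$ is well-defined because the preceding corollary shows $\pi_x=\pi_y$ whenever $F_x=F_y$, so $\pi_F$ does not depend on the choice of relative interior point. It is surjective by the very definition of a face partition, and it is injective because $\pi_F=\pi_{F'}$ means that for any $x\in\relint(F)$ and $x'\in\relint(F')$ one has $\pi_x=\pi_{x'}$, whence $F=F_x=F_{x'}=F'$ by the same corollary.

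For the order structure, I would take $F,F'$ non-empty faces with relative interior points $x,x'$. Recall that $F\subseteq F'$ holds if and only if $x\in F'$, which by \Cref{prop:mopface} is equivalent to $x$ being constant on the blocks of $\pi_{x'}=\pi_{F'}$, and this in turn means that $\pi_{F'}=\pi_{x'}$ is a refinement of $\pi_x=\pi_F$. Thus $\Phi$ intertwines inclusion of faces with reverse refinement of partitions; equivalently, if the partition lattice is equipped with the (reverse) ordering in which $\pi\le\pi'$ means $\pi'$ refines $\pi$, then $\Phi$ is an order isomorphism onto the induced subposet of face partitions.

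There is no real obstacle: the statement is essentially a bookkeeping consequence of the corollaries above, which is presumably why the author marks it with $\qed$ in the statement itself. The only point requiring a moment of care is to fix the orientation convention on the partition lattice so that the bijection is order-preserving rather than order-reversing; once this is pinned down, the isomorphism follows by assembling the three previous corollaries.
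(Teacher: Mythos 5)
Your proposal is correct and is exactly the argument the paper intends: the corollary is stated with a \(\qed\) because it is just the assembly of the three preceding corollaries, namely that \(F\mapsto\pi_F\) is a well-defined bijection onto the set of face partitions and that \(F_y\subseteq F_x\) corresponds to \(\pi_x\) refining \(\pi_y\). Your remark about fixing the orientation of the refinement order is a legitimate point of care, and your resolution of it is fine.
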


For the marked order polytope from \Cref{ex:pentagon}, we illustrated the face partitions in \Cref{fig:facepartitions}.
The free blocks are highlighted in blue round shapes, non-free blocks in red angular shapes.
We see that the dimensions of the faces are given by the numbers of free blocks in the associated face partitions and that face inclusions correspond to refinements of partitions.

\begin{figure}
    \centering
\begin{tikzpicture}[scale=1.5]
    \tikzset{el/.style={draw, fill, circle, minimum size=2pt, inner sep=0, outer sep=0}}
    \tikzset{elm/.style={fill, minimum size=2pt, inner sep=0, outer sep=0}}
    \tikzset{blockfree/.style={blue,line width=5.6pt, line join=round, line cap=round}}
    \tikzset{blocknonfree/.style={red,line width=4.6pt, line cap=rect}}
    \tikzset{block2free/.style={blue!20,fill,line width=4.3pt, line join=round, line cap=round}}
    \tikzset{block2nonfree/.style={red!20,line width=3.3pt, line cap=rect}}
    \draw (0,1) coordinate (A);
    \draw (1,1) coordinate (B);
    \draw (3,3) coordinate (C);
    \draw (3,4) coordinate (D);
    \draw (0,4) coordinate (E);

    \fill[fill=black!6,draw=black,thick] (A) -- (B) -- (C) -- (D) -- (E) -- cycle;

    \newcommand\drawblock[2][]{
                \draw[fill,block#1] #2;
                \draw[fill,block2#1] #2;
    }

    \draw (A)++(-.4,-.4) node[anchor=center] {
        \begin{tikzpicture}[scale=.3]
            \path (0,0) node[elm] (0) {};
            \path (0,1) node[el] (x) {};
            \path (0,2) node[el] (y) {};
            \path (0,3) node[elm] (4) {};
            \path (-1,1) node[elm] (1) {};
            \path (1,2) node[elm] (3) {};
            \draw[covrel]
                  (0.center) -- (x.center) -- (y.center) -- (4.center)
                  (1.center) -- (y.center)
                  (x.center) -- (3.center);
            \begin{scope}[on background layer]
                \drawblock[nonfree]{(0.center) -- (x.center)}
                \drawblock[nonfree]{(1.center) -- (y.center)}
                \drawblock[nonfree]{(3.center) -- +(0,.1pt)}
                \drawblock[nonfree]{(4.center) -- +(0,.1pt)}
            \end{scope}
        \end{tikzpicture}
    };
    \draw (B)++(.4,-.4) node[anchor=center] {
        \begin{tikzpicture}[scale=.3]
            \path (0,0) node[elm] (0) {};
            \path (0,1) node[el] (x) {};
            \path (0,2) node[el] (y) {};
            \path (0,3) node[elm] (4) {};
            \path (-1,1) node[elm] (1) {};
            \path (1,2) node[elm] (3) {};
            \draw[covrel]
                  (0.center) -- (x.center) -- (y.center) -- (4.center)
                  (1.center) -- (y.center)
                  (x.center) -- (3.center);
            \begin{scope}[on background layer]
                \drawblock[nonfree]{(0.center) -- +(0,.1pt)}
                \drawblock[nonfree]{(1.center) -- (y.center) -- (x.center) -- cycle}
                \drawblock[nonfree]{(3.center) -- +(0,.1pt)}
                \drawblock[nonfree]{(4.center) -- +(0,.1pt)}
            \end{scope}
        \end{tikzpicture}
    };
    \draw (C)++(.4,-.4) node[anchor=center] {
        \begin{tikzpicture}[scale=.3]
            \path (0,0) node[elm] (0) {};
            \path (0,1) node[el] (x) {};
            \path (0,2) node[el] (y) {};
            \path (0,3) node[elm] (4) {};
            \path (-1,1) node[elm] (1) {};
            \path (1,2) node[elm] (3) {};
            \draw[covrel]
                  (0.center) -- (x.center) -- (y.center) -- (4.center)
                  (1.center) -- (y.center)
                  (x.center) -- (3.center);
            \begin{scope}[on background layer]
                \drawblock[nonfree]{(0.center) -- +(0,.1pt)}
                \drawblock[nonfree]{(1.center) -- +(0,.1pt)}
                \drawblock[nonfree]{(x.center) -- (y.center) -- (3.center) -- cycle}
                \drawblock[nonfree]{(4.center) -- +(0,.1pt)}
            \end{scope}
        \end{tikzpicture}
    };
    \draw (D)++(.4,.4) node[anchor=center] {
        \begin{tikzpicture}[scale=.3]
            \path (0,0) node[elm] (0) {};
            \path (0,1) node[el] (x) {};
            \path (0,2) node[el] (y) {};
            \path (0,3) node[elm] (4) {};
            \path (-1,1) node[elm] (1) {};
            \path (1,2) node[elm] (3) {};
            \draw[covrel]
                  (0.center) -- (x.center) -- (y.center) -- (4.center)
                  (1.center) -- (y.center)
                  (x.center) -- (3.center);
            \begin{scope}[on background layer]
                \drawblock[nonfree]{(0.center) -- +(0,.1pt)}
                \drawblock[nonfree]{(1.center) -- +(0,.1pt)}
                \drawblock[nonfree]{(x.center) -- (3.center)}
                \drawblock[nonfree]{(y.center) -- (4.center)}
            \end{scope}
        \end{tikzpicture}
    };
    \draw (E)++(-.4,.4) node[anchor=center] {
        \begin{tikzpicture}[scale=.3]
            \path (0,0) node[elm] (0) {};
            \path (0,1) node[el] (x) {};
            \path (0,2) node[el] (y) {};
            \path (0,3) node[elm] (4) {};
            \path (-1,1) node[elm] (1) {};
            \path (1,2) node[elm] (3) {};
            \draw[covrel]
                  (0.center) -- (x.center) -- (y.center) -- (4.center)
                  (1.center) -- (y.center)
                  (x.center) -- (3.center);
            \begin{scope}[on background layer]
                \drawblock[nonfree]{(0.center) -- (x.center)}
                \drawblock[nonfree]{(1.center) -- +(0,.1pt)}
                \drawblock[nonfree]{(y.center) -- (4.center)}
                \drawblock[nonfree]{(3.center) -- +(0,.1pt)}
            \end{scope}
        \end{tikzpicture}
    };
    \draw (0,2.5)++(-.4,0) node[anchor=center] {
        \begin{tikzpicture}[scale=.3]
            \path (0,0) node[elm] (0) {};
            \path (0,1) node[el] (x) {};
            \path (0,2) node[el] (y) {};
            \path (0,3) node[elm] (4) {};
            \path (-1,1) node[elm] (1) {};
            \path (1,2) node[elm] (3) {};
            \draw[covrel]
                  (0.center) -- (x.center) -- (y.center) -- (4.center)
                  (1.center) -- (y.center)
                  (x.center) -- (3.center);
            \begin{scope}[on background layer]
                \drawblock[nonfree]{(0.center) -- (x.center)}
                \drawblock[free]{(y.center) -- (y.center)}
                \drawblock[nonfree]{(1.center) -- +(0,.1pt)}
                \drawblock[nonfree]{(3.center) -- +(0,.1pt)}
                \drawblock[nonfree]{(4.center) -- +(0,.1pt)}
            \end{scope}
        \end{tikzpicture}
    };
    \draw (0.5,1)++(0,-.4) node[anchor=center] {
        \begin{tikzpicture}[scale=.3]
            \path (0,0) node[elm] (0) {};
            \path (0,1) node[el] (x) {};
            \path (0,2) node[el] (y) {};
            \path (0,3) node[elm] (4) {};
            \path (-1,1) node[elm] (1) {};
            \path (1,2) node[elm] (3) {};
            \draw[covrel]
                  (0.center) -- (x.center) -- (y.center) -- (4.center)
                  (1.center) -- (y.center)
                  (x.center) -- (3.center);
            \begin{scope}[on background layer]
                \drawblock[nonfree]{(0.center) -- +(0,.1pt)}
                \drawblock[free]{(x.center) -- (x.center)}
                \drawblock[nonfree]{(1.center) -- (y.center)}
                \drawblock[nonfree]{(3.center) -- +(0,.1pt)}
                \drawblock[nonfree]{(4.center) -- +(0,.1pt)}
            \end{scope}
        \end{tikzpicture}
    };
    \draw (2,2)++(.4,-.4) node[anchor=center] {
        \begin{tikzpicture}[scale=.3]
            \path (0,0) node[elm] (0) {};
            \path (0,1) node[el] (x) {};
            \path (0,2) node[el] (y) {};
            \path (0,3) node[elm] (4) {};
            \path (-1,1) node[elm] (1) {};
            \path (1,2) node[elm] (3) {};
            \draw[covrel]
                  (0.center) -- (x.center) -- (y.center) -- (4.center)
                  (1.center) -- (y.center)
                  (x.center) -- (3.center);
            \begin{scope}[on background layer]
                \drawblock[nonfree]{(0.center) -- +(0,.1pt)}
                \drawblock[free]{(x.center) -- (y.center)}
                \drawblock[nonfree]{(1.center) -- +(0,.1pt)}
                \drawblock[nonfree]{(3.center) -- +(0,.1pt)}
                \drawblock[nonfree]{(4.center) -- +(0,.1pt)}
            \end{scope}
        \end{tikzpicture}
    };
    \draw (3,3.5)++(.4,0) node[anchor=center] {
        \begin{tikzpicture}[scale=.3]
            \path (0,0) node[elm] (0) {};
            \path (0,1) node[el] (x) {};
            \path (0,2) node[el] (y) {};
            \path (0,3) node[elm] (4) {};
            \path (-1,1) node[elm] (1) {};
            \path (1,2) node[elm] (3) {};
            \draw[covrel]
                  (0.center) -- (x.center) -- (y.center) -- (4.center)
                  (1.center) -- (y.center)
                  (x.center) -- (3.center);
            \begin{scope}[on background layer]
                \drawblock[nonfree]{(0.center) -- +(0,.1pt)}
                \drawblock[free]{(y.center) -- (y.center)}
                \drawblock[nonfree]{(1.center) -- +(0,.1pt)}
                \drawblock[nonfree]{(x.center) -- (3.center)}
                \drawblock[nonfree]{(4.center) -- +(0,.1pt)}
            \end{scope}
        \end{tikzpicture}
    };
    \draw (1.5,4)++(0,.4) node[anchor=center] {
        \begin{tikzpicture}[scale=.3]
            \path (0,0) node[elm] (0) {};
            \path (0,1) node[el] (x) {};
            \path (0,2) node[el] (y) {};
            \path (0,3) node[elm] (4) {};
            \path (-1,1) node[elm] (1) {};
            \path (1,2) node[elm] (3) {};
            \draw[covrel]
                  (0.center) -- (x.center) -- (y.center) -- (4.center)
                  (1.center) -- (y.center)
                  (x.center) -- (3.center);
            \begin{scope}[on background layer]
                \drawblock[nonfree]{(0.center) -- +(0,.1pt)}
                \drawblock[free]{(x.center) -- (x.center)}
                \drawblock[nonfree]{(1.center) -- +(0,.1pt)}
                \drawblock[nonfree]{(3.center) -- +(0,.1pt)}
                \drawblock[nonfree]{(y.center) -- (4.center)}
            \end{scope}
        \end{tikzpicture}
    };
    \draw (1.3,2.7) node[anchor=center] {
        \begin{tikzpicture}[scale=.3]
            \path (0,0) node[elm] (0) {};
            \path (0,1) node[el] (x) {};
            \path (0,2) node[el] (y) {};
            \path (0,3) node[elm] (4) {};
            \path (-1,1) node[elm] (1) {};
            \path (1,2) node[elm] (3) {};
            \draw[covrel]
                  (0.center) -- (x.center) -- (y.center) -- (4.center)
                  (1.center) -- (y.center)
                  (x.center) -- (3.center);
            \begin{scope}[on background layer]
                \drawblock[nonfree]{(0.center) -- +(0,.1pt)}
                \drawblock[free]{(x.center) -- (x.center)}
                \drawblock[free]{(y.center) -- (y.center)}
                \drawblock[nonfree]{(1.center) -- +(0,.1pt)}
                \drawblock[nonfree]{(3.center) -- +(0,.1pt)}
                \drawblock[nonfree]{(4.center) -- +(0,.1pt)}
            \end{scope}
        \end{tikzpicture}
    };
\end{tikzpicture}
\caption[Face partitions of \Cref{ex:pentagon}]{The face partitions of the marked order polytope in \Cref{ex:pentagon}.}
    \label{fig:facepartitions}
\end{figure}

In order to characterize the face partitions of a marked poset \((P,\lambda)\) combinatorially, we introduce some properties of partitions of \(P\).

\begin{definition}\label{def:mpquotient}
    Let \((P,\lambda)\) be a marked poset.
    A partition \(\pi\) of \(P\) is \emph{connected} if the blocks of \(\pi\) are connected as induced subposets of \(P\).
    It is \emph{\(P\)-compatible}, if the relation \(\le\) defined on \(\pi\) as the transitive closure of
    \begin{equation*}
        B \le C \quad\text{if}\quad \text{\(p\le q\) for some \(p\in B\), \(q\in C\)}
    \end{equation*}
    is anti-symmetric.
    In this case \(\le\) is a partial order on \(\pi\).
    A \(P\)-compatible partition \(\pi\) is called \emph{\((P,\lambda)\)-compatible}, if whenever \(a\in B\cap P^*\) and \(b\in C\cap P^*\) for some blocks \(B\le C\), we have \(\lambda(a)\le \lambda(b)\).
\end{definition}

\begin{remark}
    Whenever a partition \(\pi\) of a poset \(P\) is \(P\)-compatible, it is also \emph{convex}.
    That is, for \(a<b<c\) with \(a\) and \(c\) in the same block \(B\in\pi\), we also have \(b\in B\), since otherwise the blocks containing \(a\) and \(b\) would contradict the relation on the blocks being anti-symmetric.
    This implies that the blocks in a connected, \(P\)-compatible partition are not just connected as induced subposets of \(P\) but even connected as induced subgraphs of the Hasse diagram of \(P\).
\end{remark}

\begin{proposition} \label{prop:mp-quotient}
    Let \((P,\lambda)\) be a marked poset.
    A \((P,\lambda)\)-compatible partition \(\pi\) of \(P\) gives rise to a marked poset \((P/\pi, \lambda/\pi)\) where \(P/\pi\) is the poset of blocks in \(\pi\), \((P/\pi)^* = \pi\setminus\tilde\pi\) and \(\lambda/\pi\colon (P/\pi)^*\to\R\) is defined by \((\lambda/\pi)(B) = \lambda(a)\) for any \(a\in B\cap P^*\).
    Furthermore, the quotient map \(P\to P/\pi\) defines a map \((P,\lambda)\to(P/\pi,\lambda/\pi)\) of marked posets.
\end{proposition}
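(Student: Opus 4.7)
The plan is to verify each piece of the proposed structure in turn; the definitions are arranged so that $(P,\lambda)$-compatibility does almost all of the work.

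First I would check that $(P/\pi,\le)$ is a well-defined poset. Reflexivity and transitivity of $\le$ on $\pi$ hold by construction (the relation is defined as a transitive closure), while antisymmetry is exactly the $P$-compatibility assumption. Since $P$ is finite, $P/\pi$ is a finite poset, and by definition $(P/\pi)^*=\pi\setminus\tilde\pi$ sits in $P/\pi$ as an induced subposet.

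The main verification is that $\lambda/\pi$ is well-defined on non-free blocks, since such a block $B$ may contain several marked elements. Given $a,b\in B\cap P^*$, I would apply $(P,\lambda)$-compatibility to the relation $B\le B$ in both orderings of the pair $(a,b)$, obtaining $\lambda(a)\le\lambda(b)$ and $\lambda(b)\le\lambda(a)$ and hence $\lambda(a)=\lambda(b)$. This is the only genuinely nontrivial step, and I expect it to be the main (though mild) obstacle: the definition of compatibility must be read to include the reflexive case $B=C$, and this is precisely what forces a single marking value per block.

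Next I would show that $\lambda/\pi\colon (P/\pi)^*\to\R$ is order-preserving. Given $B\le C$ with both blocks non-free, choose any representatives $a\in B\cap P^*$ and $b\in C\cap P^*$ and apply $(P,\lambda)$-compatibility directly to conclude $(\lambda/\pi)(B)=\lambda(a)\le\lambda(b)=(\lambda/\pi)(C)$. Together with well-definedness this shows $(P/\pi,\lambda/\pi)$ is a marked poset.

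Finally I would verify that the quotient map $q\colon P\to P/\pi$, $p\mapsto[p]$, is a morphism of marked posets. Order preservation is immediate from the definition of $\le$ on $\pi$. For $a\in P^*$ the block $[a]$ is non-free by construction, so $q(a)\in(P/\pi)^*$, and $(\lambda/\pi)(q(a))=\lambda(a)$ by the definition of $\lambda/\pi$. Assembling these four checks yields the proposition.
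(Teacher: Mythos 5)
Your proof is correct and follows essentially the same route as the paper: antisymmetry from $P$-compatibility gives the poset structure on the blocks, and $(P,\lambda)$-compatibility (applied in the reflexive case $B=C$ for well-definedness, and to $B\le C$ for order preservation) gives the marking $\lambda/\pi$ and the morphism property of the quotient map. Your explicit treatment of the case $B=C$ correctly spells out what the paper's proof leaves implicit.
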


\begin{proof}
    Since \(\pi\) is \(P\)-compatible, the blocks of \(\pi\) form a poset \(P/\pi\) as in \Cref{def:mpquotient}.
    Since \(\pi\) is \((P,\lambda)\)-compatible, we have \(\lambda(a)=\lambda(b)\) whenever \(a,b\in B\cap P^*\) for some non-free block \(B\in\pi\).
    Hence, the map \(\lambda/\pi\) is well-defined.
    It is order-preserving by the definition of \((P,\lambda)\)-compatibility.
    Furthermore, we have a commutative diagram
\begin{equation*}
    \begin{tikzcd}
        P \arrow[d] & P^* \arrow[r,"\lambda"] \arrow[l, hook'] \arrow[d] & \R \arrow[d,equal]\\
        P/\pi & (P/\pi)^* \arrow[r,"\lambda/\pi"] \arrow[l, hook'] & \R\makebox[0pt][l]{.}
    \end{tikzcd}
\end{equation*}
Thus, we have a quotient map \((P,\lambda)\to(P/\pi,\lambda/\pi)\).
\end{proof}

\begin{proposition} \label{prop:facepartitionproperties}
    Every face partition \(\pi_F\) of \((P,\lambda)\) is \((P,\lambda)\)-compatible, connected and the induced marking on \((P/\pi_F,\lambda/\pi_F)\) is strict.
\end{proposition}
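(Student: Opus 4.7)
The plan is to select a relative interior point $x\in F$ so that $\pi_F=\pi_x$, and then verify the three claims by exploiting the fact that $x$ is a single order-preserving extension of $\lambda$ that is constant on each block of $\pi_x$. I will denote by $x_B$ the common value of $x$ on a block $B\in\pi_x$. Connectedness is essentially free: by construction $\pi_x$ is obtained as the transitive closure of a relation among comparable elements sharing an $x$-value, so each block is a connected subposet of $P$.

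Next I would handle $(P,\lambda)$-compatibility, i.e., anti-symmetry of the candidate relation on blocks. The key observation is that whenever $B\le C$ in the sense of \Cref{def:mpquotient}, unfolding the transitive closure produces a chain of blocks $B=B_0,B_1,\dots,B_k=C$ with witnesses $p_i\le q_i$, $p_i\in B_{i-1}$, $q_i\in B_i$; order-preservation of $x$ together with constancy on each block yields $x_{B_0}\le x_{B_1}\le\dots\le x_{B_k}$. Hence $B\le C$ implies $x_B\le x_C$. If now both $B\le C$ and $C\le B$ hold, then $x_B=x_C$, and all intermediate block values are forced to be equal; but then each witness pair $p_i\le q_i$ has $x_{p_i}=x_{q_i}$, so $p_i\sim_x q_i$, which collapses consecutive blocks $B_{i-1}=B_i$ and gives $B=C$. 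This simultaneously gives the marking condition: if $a\in B\cap P^*$ and $b\in C\cap P^*$ with $B\le C$, then $\lambda(a)=x_a=x_B\le x_C=x_b=\lambda(b)$.

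For strictness of $\lambda/\pi_x$, suppose $B<C$ in $P/\pi_x$ with both blocks non-free, and assume for contradiction $(\lambda/\pi_x)(B)=(\lambda/\pi_x)(C)$. Picking marked representatives $a\in B$, $b\in C$ we get $x_B=\lambda(a)=\lambda(b)=x_C$, and re-running the chain argument above forces $B=C$, contradicting $B<C$. So the induced marking is strict.

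The only step that requires any care is the anti-symmetry/strictness argument, and even there the mechanism is the same observation: along any witnessing chain of relations between blocks, the $x$-values are weakly increasing, and equality of endpoints forces termwise equality, which by the defining relation $\sim_x$ collapses the chain of blocks into a single block. Once this is in place the rest is bookkeeping, and I would present the three items in the order above (connectedness, compatibility, strictness) since each later item reuses the chain-collapsing lemma established for the preceding one.
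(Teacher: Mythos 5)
Your proposal is correct and follows essentially the same route as the paper's proof: pick a relative interior point $x$ with $\pi_F=\pi_x$, get connectedness for free from the definition of $\sim_x$, and prove anti-symmetry, the marking inequality, and strictness all from the single observation that a witnessing chain of blocks forces weakly increasing $x$-values, with equality of endpoints collapsing the whole chain via $\sim_x$. No gaps; the presentation order and the key chain-collapsing mechanism match the paper.
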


\begin{proof}
    Let \(F\) be a non-empty face of \(\OO(P,\lambda)\).
    It is obvious that \(\pi_F\) is connected by construction, since it is given by the transitive closure of a relation that only relates pairs of comparable elements.
    To verify that \(\pi_F\) is \(P\)-compatible, we need to check that the induced relation \(\le\) on the blocks of \(\pi_F\) is anti-symmetric.
    Assume we have blocks \(B,C\in\pi_F\) such that \(B\le C\) and \(C\le B\).
    Since \(B\le C\), there is a finite sequence of blocks \(B=X_1,X_2,\dots,X_k,X_{k+1}=C\) such that for \(i=1,\dots,k\) there are some \(p_i\in X_i\), \(q_i\in X_{i+1}\) with \(p_i\le q_i\).
    Take any \(x\) in the relative interior of \(F\), then \(x_{p_i} \le x_{q_i}\) for \(i=1,\dots,k\) and since \(x\) is constant on the blocks of \(\pi_F\), we have \(x_{q_i}=x_{p_{i+1}}\) for \(i=1,\dots,k-1\).
    To summarize, we have
    \begin{equation}
        x_{p_1} \le x_{q_1} = x_{p_2} \le x_{q_2} = \cdots \le \cdots = x_{p_k} \le x_{q_k}.
        \label{eq:blockchainvalues}
    \end{equation}
    Hence, the constant value \(x\) takes on \(B\) is less than or equal to the constant value \(x\) takes on \(C\).
    Since we also have \(C\le B\), we conclude that \(x\) takes equal values on the blocks \(B\) and \(C\).
    From \eqref{eq:blockchainvalues} we conclude that \(x\) takes equal values on all blocks \(X_i\).
    From the definition of \(\pi_x=\pi_F\) it follows that the blocks \(X_i\) are in fact all equal, in particular \(B=C\) and the relation is anti-symmetric.

    To see that \(\pi_F\) is \((P,\lambda)\)-compatible, let \(B,C\in\pi\) be non-free blocks with \(B\le C\).
    By the same argument as above, we know that any \(x\in F\) has constant value on \(B\) less than or equal to the constant value on \(C\), so \(\lambda(a)\le\lambda(b)\) for marked \(a\in B\), \(b\in C\).
    If \(\lambda(a)=\lambda(b)\) we have \(B=C\), by the same argument as above, so the induced marking is strict.
\end{proof}

Given any partition \(\pi\) of \(P\), we can define a polyhedron \(F_\pi\) contained in \(\OO(P,\lambda)\) by
\begin{equation*}
    F_\pi = \left\{ \, y\in Q : \text{\(y\) is constant on the blocks of \(\pi\)} \right\}.
\end{equation*}
If \(\pi=\pi_F\) is a face partition of \((P,\lambda)\), we have \(F_\pi = F\) by \Cref{prop:mopface}.
However, \(F_\pi\) is not a face for all partitions \(\pi\) of \(P\).

As long as \(\pi\) is \((P,\lambda)\)-compatible, we can show that the polyhedron \(F_\pi\) is affinely isomorphic to the marked order polyhedron \(\OO(P/\pi, \lambda/\pi)\).
The isomorphism will be induced by the quotient map \(P\to P/\pi\).
Our first step is to verify that this induced map is indeed an injection.

\begin{lemma} \label{lem:f*inj}
    Let \(f\colon (P,\lambda)\to(P',\lambda')\) be a map of marked posets.
    If \(f\) is surjective, the induced map \(f^*\colon \OO(P',\lambda')\to \OO(P,\lambda)\) is injective.
\end{lemma}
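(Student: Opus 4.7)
The plan is to unwind the definition of $f^*$ and use surjectivity of $f$ directly, without any extra machinery. Recall from the discussion preceding \Cref{prop:moppoint} that for a point $x\in\OO(P',\lambda')$, viewed as an order-preserving extension $x\colon P'\to\R$ of $\lambda'$, the induced point $f^*(x)\in\OO(P,\lambda)$ is simply the composition $x\circ f\colon P\to\R$. So the statement reduces to the elementary fact that precomposition with a surjection is injective.

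Concretely, I would suppose $x,y\in\OO(P',\lambda')$ satisfy $f^*(x)=f^*(y)$, i.e.\ $x\circ f = y\circ f$ as functions $P\to\R$. For any element $p'\in P'$, surjectivity of $f$ produces some $p\in P$ with $f(p)=p'$, and evaluating the equality $x\circ f = y\circ f$ at $p$ gives $x(p') = x(f(p)) = y(f(p)) = y(p')$. Since $p'$ was arbitrary, $x=y$, which establishes injectivity of $f^*$.

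There is no real obstacle here; the lemma is a formal consequence of the categorical description of $\OO$ as a contravariant functor given by precomposition. The only thing to be careful about is to remember that points of $\OO(P',\lambda')$ are being identified with poset maps $P'\to\R$ extending $\lambda'$, as set up in the paragraph containing the commutative diagram for $f^*(x)$.
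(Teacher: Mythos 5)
Your argument is exactly the paper's proof: take $x,y$ with $f^*(x)=f^*(y)$, use surjectivity of $f$ to write an arbitrary $p'\in P'$ as $f(p)$, and evaluate $x\circ f = y\circ f$ at $p$ to conclude $x_{p'}=y_{p'}$. The proposal is correct and takes essentially the same route.
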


\begin{proof}
    Let \(x,y\in \OO(P',\lambda')\) such that \(f^*(x)=f^*(y)\).
    Given any \(p\in P'\) we need to show \(x_p=y_p\).
    Since \(f\) is surjective, \(p=f(q)\) for some \(q\in P\) and thus
    \begin{equation*}
        x_p = x_{f(q)} = f^*(x)_q = f^*(y)_q = y_{f(q)} = y_p. \qedhere
    \end{equation*}
\end{proof}

\begin{proposition} \label{prop:submop}
    Let \((P,\lambda)\) be a marked poset and \(\pi\) a \((P,\lambda)\)-compatible partition.
    The quotient map \(q\colon (P,\lambda)\to(P/\pi,\lambda/\pi)\) induces an injection
    \begin{equation*}
        q^*\colon \OO(P/\pi,\lambda/\pi) \lhook\joinrel\longrightarrow \OO(P,\lambda)
    \end{equation*}
    with image \(q^*(\OO(P/\pi,\lambda/\pi)) = F_\pi\).
\end{proposition}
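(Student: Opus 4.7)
The plan is to verify injectivity first via the preceding lemma, then establish the image equality by the two inclusions $q^*(\OO(P/\pi,\lambda/\pi)) \subseteq F_\pi$ and $F_\pi \subseteq q^*(\OO(P/\pi,\lambda/\pi))$, the second requiring the construction of a preimage.

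Injectivity is immediate: the quotient map $q\colon P \to P/\pi$ sending each element to its block is surjective, so \Cref{lem:f*inj} applies. For the inclusion $q^*(\OO(P/\pi,\lambda/\pi)) \subseteq F_\pi$, take $x \in \OO(P/\pi,\lambda/\pi)$ and observe that $q^*(x) = x \circ q$ lies in $\OO(P,\lambda)$ by functoriality of $\OO$ (which has already been established). Moreover, if $p,p'$ lie in the same block $B \in \pi$, then $q^*(x)_p = x_B = q^*(x)_{p'}$, so $q^*(x)$ is constant on blocks and hence lies in $F_\pi$.

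For the reverse inclusion, given $y \in F_\pi$, I define a candidate $x \colon P/\pi \to \R$ by $x_B := y_p$ for any choice of $p \in B$; this is well defined because $y$ is constant on the blocks of $\pi$. It is clear that $q^*(x) = y$ and that $x$ restricts to $\lambda/\pi$ on $(P/\pi)^* = \pi \setminus \tilde\pi$, since for a non-free block $B$ and any $a \in B \cap P^*$ we have $x_B = y_a = \lambda(a) = (\lambda/\pi)(B)$. The point requiring attention is that $x$ is order-preserving on $P/\pi$, and this is the only real obstacle in the argument.

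To verify order-preservation, suppose $B \le C$ in $P/\pi$. Unfolding the transitive closure in \Cref{def:mpquotient}, there is a finite sequence of blocks $B = X_0, X_1, \dots, X_k = C$ and elements $p_i \in X_i$, $q_i \in X_{i+1}$ with $p_i \le q_i$ in $P$ for $i = 0, \dots, k-1$. Since $y \in \OO(P,\lambda)$ is order-preserving on $P$ and is constant on each $X_i$, we get
\begin{equation*}
    x_B = y_{p_0} \le y_{q_0} = y_{p_1} \le y_{q_1} = \cdots \le y_{q_{k-1}} = x_C,
\end{equation*}
exactly mimicking the chain argument used in the proof of \Cref{prop:facepartitionproperties}. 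Hence $x \in \OO(P/\pi,\lambda/\pi)$, and $y = q^*(x)$ lies in the image, completing the proof.
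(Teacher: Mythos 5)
Your proof is correct and follows essentially the same route as the paper: injectivity via \Cref{lem:f*inj}, the forward inclusion by constancy of $q^*(x)$ on blocks, and the reverse inclusion by constructing the preimage $x_B := y_p$. The only difference is that you explicitly verify order-preservation of $x$ via the chain-of-blocks argument, a detail the paper leaves implicit when it asserts that $x$ is a point of $\OO(P/\pi,\lambda/\pi)$; this is a welcome addition rather than a deviation.
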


\begin{proof}
    By \Cref{lem:f*inj} we know that \(q^*\) is an injection.
    Hence, we only need to verify that \(F_\pi\) is the image of \(q^*\).
    The image is contained in \(F_\pi\), since whenever \(p\) and \(p'\) are in the same block \(B\in\pi\), we have
    \begin{equation*}
        q^*(x)_p = x_{q(p)} = x_B = x_{q(p')} = q^*(x)_{p'}.
    \end{equation*}
    Hence, all \(q^*(x)\) are constant on the blocks of \(\pi\).
    Conversely, given any point \(y\in\OO(P,\lambda)\) constant on the blocks of \(\pi\), we obtain a well defined map \(x\colon P/\pi \to \R\) sending each block to the constant value \(y_p\) for all \(p\) in the block.
    This map is a point \(x\in\OO(P/\pi,\lambda/\pi)\) mapped to \(y\) by \(q^*\).
\end{proof}

The previous proposition tells us, that whenever we have a \((P,\lambda)\)-compatible partition \(\pi\), the marked order polyhedron \(\OO(P/\pi,\lambda/\pi)\) is affinely isomorphic to the polyhedron \(F_\pi\subseteq\OO(P,\lambda)\) via the embedding \(q^*\) induced by the quotient map.
From now on, we refer to affine isomorphisms arising this way as the \emph{canonical affine isomorphism \(\OO(P/\pi,\lambda/\pi)\cong F_\pi\)}.

\begin{corollary}
    For every non-empty face \(F\) of a marked order polyhedron \(\OO(P,\lambda)\) we have a canonical affine isomorphism \(\OO(P/\pi_F,\lambda/\pi_F)\cong F\). \qed
\end{corollary}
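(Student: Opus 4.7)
The plan is to assemble the corollary directly by chaining together the three results immediately preceding it; no new geometric argument is needed, and this is why the statement appears with a \qed rather than a proof in the text.

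First, I would pick any point $x$ in the relative interior of $F$ so that the face partition $\pi_F = \pi_x$ is well-defined. By \Cref{prop:facepartitionproperties}, $\pi_F$ is $(P,\lambda)$-compatible, so the quotient marked poset $(P/\pi_F, \lambda/\pi_F)$ exists (via \Cref{prop:mp-quotient}) and the hypotheses of \Cref{prop:submop} are satisfied. Applying \Cref{prop:submop} to $\pi = \pi_F$ therefore produces an injective affine map
\begin{equation*}
    q^*\colon \OO(P/\pi_F,\lambda/\pi_F) \lhook\joinrel\longrightarrow \OO(P,\lambda)
\end{equation*}
whose image is exactly $F_{\pi_F}$, defined as the set of $y \in \OO(P,\lambda)$ constant on the blocks of $\pi_F$.

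To identify $F_{\pi_F}$ with $F$, I would invoke \Cref{prop:mopface} applied to the relative interior point $x$: it says $F = F_x = \{y \in Q : y \text{ is constant on the blocks of } \pi_x\}$, which is precisely $F_{\pi_F}$. Hence $q^*$ is an injective affine map from $\OO(P/\pi_F,\lambda/\pi_F)$ onto $F$, which is the canonical affine isomorphism referred to in the statement (and matches the definition of ``canonical'' introduced in the paragraph following \Cref{prop:submop}).

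Since every ingredient has been set up in the preceding lemmas, there is no genuine obstacle here; the only thing to watch is a small bookkeeping point, namely that $\pi_F$ depends only on $F$ and not on the chosen $x$ in its relative interior, but this is exactly the content of the corollary stating $F_y = F_x \iff \pi_y = \pi_x$ proved just before. The statement is therefore a direct bookkeeping corollary.
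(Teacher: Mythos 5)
Your proposal is correct and follows exactly the chain of results the paper intends: $(P,\lambda)$-compatibility of $\pi_F$ from \Cref{prop:facepartitionproperties}, the canonical isomorphism onto $F_{\pi_F}$ from \Cref{prop:submop}, and the identification $F_{\pi_F}=F$ via \Cref{prop:mopface}. This matches the paper's (implicit) justification for stating the corollary with a \qed.
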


We are now ready to state and prove the characterization of face partitions of marked posets.

\begin{theorem} \label{thm:facepartitions}
    A partition \(\pi\) of a marked poset \((P,\lambda)\) is a face partition if and only if it is \((P,\lambda)\)-compatible, connected and the induced marking on \((P/\pi,\lambda/\pi)\) is strict.
\end{theorem}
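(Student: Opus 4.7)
The forward implication is exactly \Cref{prop:facepartitionproperties}, so the plan concentrates on the converse: given a partition $\pi$ that is $(P,\lambda)$-compatible, connected, and such that $\lambda/\pi$ is strict on $P/\pi$, I want to exhibit a point $x \in \OO(P,\lambda)$ with $\pi_x = \pi$. Because $\pi$ is $(P,\lambda)$-compatible, \Cref{prop:mp-quotient} produces the marked poset $(P/\pi, \lambda/\pi)$ and \Cref{prop:submop} gives the canonical affine isomorphism $q^* \colon \OO(P/\pi, \lambda/\pi) \isoto F_\pi$. Strictness of $\lambda/\pi$ together with \Cref{prop:moppoint} yields a point $\bar x \in \OO(P/\pi, \lambda/\pi)$ with $\bar x_B < \bar x_C$ whenever $B < C$ in $P/\pi$. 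Set $x := q^*(\bar x) \in F_\pi \subseteq \OO(P,\lambda)$; by construction $x$ is constant on each block of $\pi$, with value $\bar x_B$ on the block $B$.

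The heart of the argument is verifying $\pi_x = \pi$, which I would split into two inclusions. For "$\pi$ refines $\pi_x$": pick $p, q$ in a common block $B \in \pi$. By the Remark following \Cref{def:mpquotient}, $B$ is connected in the Hasse diagram of $P$, so there is a path $p = r_0, r_1, \dots, r_k = q$ of covering relations in $P$ with all $r_i \in B$. Along this path, consecutive $r_i, r_{i+1}$ are comparable and share the common value $\bar x_B$, hence are related by $\sim_x$; by transitivity $p$ and $q$ lie in the same block of $\pi_x$.

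For the reverse inclusion "$\pi_x$ refines $\pi$": suppose $p \sim_x q$ through a chain $p = r_0, r_1, \dots, r_k = q$ of comparable elements with $x_{r_i} = x_{r_{i+1}}$. Let $B_i \in \pi$ be the block containing $r_i$. Since $r_i$ and $r_{i+1}$ are comparable, $B_i$ and $B_{i+1}$ are comparable in the quotient poset $P/\pi$; and since $x$ is constant on blocks of $\pi$ we have $\bar x_{B_i} = \bar x_{B_{i+1}}$. The strict-separation property of $\bar x$ then forces $B_i = B_{i+1}$, so all $r_i$ belong to the same block of $\pi$ and in particular $p$ and $q$ do. This shows $\pi = \pi_x$, whence $\pi$ is the face partition of the minimal face $F_x$.

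The main obstacle is the second inclusion, because it is there that all three hypotheses must cooperate: $(P,\lambda)$-compatibility is what makes $P/\pi$ a poset so that strict separation even makes sense, strictness of $\lambda/\pi$ is what lets \Cref{prop:moppoint} deliver a point with strict inequalities across every comparable pair of distinct blocks, and connectedness is what ensures $\pi_x$ cannot be strictly finer than $\pi$ (without it, a block of $\pi$ could split into several $\sim_x$-classes). Once these fit together, the rest is routine via the canonical isomorphism from \Cref{prop:submop}.
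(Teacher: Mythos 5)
Your proposal is correct and follows essentially the same route as the paper: both pull back a strictly separating point of \(\OO(P/\pi,\lambda/\pi)\) (obtained from \Cref{prop:moppoint}) through the canonical isomorphism of \Cref{prop:submop} and verify that its associated partition is exactly \(\pi\). Your write-up merely makes the two inclusions \(\pi\) refines \(\pi_x\) (via connectedness) and \(\pi_x\) refines \(\pi\) (via strict separation) slightly more explicit than the paper does.
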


\begin{proof}
    The fact that face partitions satisfy the above properties is the statement of \Cref{prop:facepartitionproperties}.
    Now let \(\pi\) be a partition of \(P\) that is \((P,\lambda)\)-compatible, connected and induces a strict marking \(\lambda/\pi\).
    By \Cref{prop:moppoint}, there is a point \(z\in\OO(P/\pi,\lambda/\pi)\) such that \(z_B < z_C\) whenever \(B<C\).
    Let \(x\in\R^P\) be the point in the polyhedron \(F_\pi\subseteq\OO(P,\lambda)\) obtained as the image of \(z\) under the canonical affine isomorphism \(\OO(P/\pi,\lambda/\pi)\isoto F_\pi\).
    We claim that \(\pi=\pi_x\), so \(\pi\) is a face partition.
    Since \(x\) is constant on the blocks of \(\pi\) and \(\pi\) is connected, we know that \(\pi\) is a refinement of \(\pi_x\).
    Now assume that the equivalence relation \(\sim_x\) defining \(\pi_x\) relates elements in different blocks of \(\pi\).
    In this case, there are blocks \(B\neq C\) of \(\pi\) with elements \(p\in B\), \(q\in C\) such that \(x_p=x_q\) and \(p<q\).
    This implies that \(z_B=z_C\) and \(B<C\), a contradiction to the choice of \(z\).
    Hence, \(\pi=\pi_x\) and \(\pi\) is a face partition of \((P,\lambda)\).
\end{proof}

\begin{remark} \label{rem:combtype}
    To decide whether a given partition \(\pi\) of a marked poset \((P,\lambda)\) satisfies the conditions in \Cref{thm:facepartitions}, it is enough to know the linear order on \(\lambda(P^*)\).
    The exact values of the marking are irrelevant.
    Hence, the face lattice of \(\OO(P,\lambda)\) is determined solely by discrete, combinatorial data.
    In fact, since the directions of facet normals do not depend on the values of \(\lambda\), we can conclude that the normal fan \(\mathcal N(\OO(P,\lambda))\) is determined by this combinatorial data.
    However, the affine isomorphism type of \(\OO(P,\lambda)\) does depend on the exact values of \(\lambda\).
\end{remark}

\begin{remark}
    When $P$ is a poset with a global minimum $\hat 0$ and global maximum $\hat 1$, the marked order polytope $\OO(P,\lambda)$ for $\lambda\colon\hat 0\mapsto 0, \hat 1\mapsto 1$ is the usual order polytope $\OO(P)$ as discussed by Stanley and Geissinger.
    In this case $(P,\lambda)$-compatibility of a partition $\pi$ of $P$ is equivalent to $\pi$ being $P$-compatible and $\hat 0$ and $\hat 1$ being in different blocks.
    Hence, the only $P$-compatible partition that is not $(P,\lambda)$-compatible is the trivial partition with a single block.
    Furthermore, the induced marking on $(P/\pi,\lambda/\pi)$ is always strict.
    Thus, we recover the face description of $\OO(P)$ in terms of connected, compatible partitions given in \cite[Theorem~1.2]{Stanley86}.
\end{remark}

\begin{example} \label{ex:afftype}
    We construct a continuous family \((Q_t)_{t\in[0,1]}\) of marked order polytopes, whose underlying marked posets all yield the same combinatorial data in the sense of \Cref{rem:combtype}, but \(Q_s\) and \(Q_t\) are affinely isomorphic if and only if \(s=t\).
    Let \((P,\lambda_t)\) be the marked poset shown in \Cref{subfig:Qt-a}.
    Letting \(t\) vary in \([0,1]\), we obtain for each \(t\) a different affine isomorphism type, since two of the vertices of \(Q_t\) will move, while the other three stay fixed and are affinely independent as can be seen in \Cref{subfig:Qt-b}.
    However, all \(Q_t\) share the same normal fan and are in particular combinatorially equivalent.
    \begin{figure}
        \centering
        \subcaptionbox[]{the marked poset \((P,\lambda_t)\)\label{subfig:Qt-a}}[0.49\textwidth][c]{\centering
        \begin{tikzpicture}[baseline={([yshift=-.5ex]current bounding box.center)},scale=0.8]
            \path (0,0) node[posetelmm] (0) {} node[right=2pt,marking] {\(0\)};
            \path (0,1) node[posetelm] (x) {} node[left,elmname] {\(p\)};
            \path (0,2) node[posetelm] (y) {} node[right,elmname] {\(q\)};
            \path (0,3) node[posetelmm] (4) {} node[right=2pt,marking] {\(4\)};
            \path (-1,1) node[posetelmm] (1) {} node[left=2pt,marking] {\(1+t\)};
            \path (1,2) node[posetelmm] (3) {} node[right=2pt,marking] {\(3\)};
            \draw[covrel]
                  (0) -- (x) -- (y) -- (4)
                  (1) -- (y)
                  (x) -- (3);
        \end{tikzpicture}}
        \hfill
        \subcaptionbox[]{the polytope \(Q_t=\wt\OO(P,\lambda_t)\)\label{subfig:Qt-b}}[0.49\textwidth][c]{\centering
        \begin{tikzpicture}[baseline={([yshift=-.5ex]current bounding box.center)},scale=.7]
            \draw (-.5,0) -- (4.5,0) node [right] {\(x_p\)};
            \draw (0,-.5) -- (0,4.5) node [above] {\(x_q\)};
            \draw (-.1,4) -- (4.5,4);
            \draw (-.1,1.5) -- (4.5,1.5);
            \draw (3,-.1) -- (3,4.5);
            \draw (-.1,-.1) -- (4.5,4.5);
            \fill[black!6,draw=black,thick] (0,1.5) -- (1.5,1.5) -- (3,3) -- (3,4) -- (0,4) -- cycle;
            \draw node at (1.5, 2.7) {\(Q_t\)};
            \draw (3,0) node[below] {\(3\)};
            \draw (0,4) node[left] {\(4\)};
            \draw (0,1.5) node[left] {\(1+t\)};
        \end{tikzpicture}}
        \caption[Marked poset and associated polytope from \Cref{ex:afftype}]{The marked poset \((P,\lambda_t)\) from \Cref{ex:afftype} and the associated marked order polytope \(Q_t=\wt\OO(P,\lambda_t)\).}
        \label{fig:Qt}
    \end{figure}
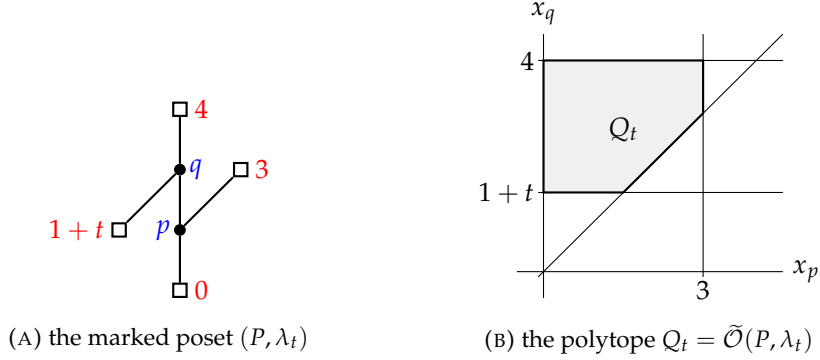
\end{example}

We continue our study of the face structure of marked order polyhedra by having a closer look at facets.
Since inequalities in the description of marked order polyhedra come from covering relations in the underlying poset, we expect a correspondence of facets to certain covering relations.
If the marked poset satisfies a certain regularity condition, the facets are indeed in bijection with the covering relations.
Hence, if we can change the underlying poset of a marked order polyhedron to a regular one, without changing the associated polyhedron, we obtain an enumeration of facets.
We start by modifying an arbitrary marked poset to a strict one by contracting constant intervals.

\begin{proposition} \label{prop:strictify}
    Given any marked poset \((P,\lambda)\), the partition \(\pi\) induced by the relations \(a\sim p\) and \(p\sim b\) whenever \([a,b]\) is a constant interval containing \(p\) yields a strictly marked poset \((P/\pi,\lambda/\pi)\) such that \(\OO(P/\pi,\lambda/\pi)\cong F_\pi=\OO(P,\lambda)\) via the canonical affine isomorphism.
\end{proposition}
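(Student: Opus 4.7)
The strategy is to identify $\pi$ with the face partition corresponding to $\OO(P,\lambda)$ itself, viewed as its own maximal face. Once this identification is secured, every asserted property follows from the theory already developed earlier in this section.

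First, I would observe that $F_\pi = \OO(P,\lambda)$. By the definition of constant interval stated before \Cref{ex:pentagon}, every $y \in \OO(P,\lambda)$ is constant on $[a,b]$ whenever $[a,b]$ is a constant interval, hence on each generating pair of $\pi$ and therefore on each block of the transitive closure. This gives $\OO(P,\lambda) \subseteq F_\pi$, while the reverse inclusion is immediate from the defining formula of $F_\pi$.

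Next, I would invoke the sharper construction carried out in the proof of \Cref{prop:moppoint} to obtain an order-preserving extension $x \in \OO(P,\lambda)$ with the property that, for comparable $p<q$, the equality $x_p = x_q$ holds precisely when $a \le p < q \le b$ for some $a,b \in P^*$ with $\lambda(a) = \lambda(b)$. I claim $\pi = \pi_x$. The inclusion $\pi \subseteq \pi_x$ is clear, since each defining relation of $\pi$ identifies elements of a constant interval, on which $x$ takes a single value. For the reverse, any pair of comparable elements identified by $\sim_x$ lies, by the characterization of $x$, inside a common constant interval $[a,b]$ and is therefore already identified by the generating relations of $\pi$.

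With $\pi = \pi_x$ established, $\pi$ is a face partition of $(P,\lambda)$, so \Cref{prop:facepartitionproperties} yields that $\pi$ is $(P,\lambda)$-compatible, connected, and that the induced marking $\lambda/\pi$ is strict. Applying \Cref{prop:submop} to this $(P,\lambda)$-compatible $\pi$ then supplies the canonical affine isomorphism $\OO(P/\pi,\lambda/\pi) \cong F_\pi$, and the first step identifies $F_\pi$ with $\OO(P,\lambda)$, completing the proof.

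The only delicate point is the identification $\pi = \pi_x$, which relies crucially on the sharp form of the extension produced in the proof of \Cref{prop:moppoint}: were that extension to identify any pair of comparable elements not already forced by a constant interval, the equality $\pi = \pi_x$ would fail and $\pi$ would not be recognized as a face partition. Given this sharp form, the remainder of the argument is a direct combination of \Cref{prop:facepartitionproperties} and \Cref{prop:submop}.
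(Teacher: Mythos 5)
Your proof is correct and follows essentially the same route as the paper: both arguments rest on the sharp extension constructed in the proof of \Cref{prop:moppoint} (where $x_p=x_q$ for $p<q$ exactly when $p,q$ lie in a common constant interval) to conclude $\pi=\pi_x$, then combine \Cref{prop:facepartitionproperties} and the canonical isomorphism with the observation that every point of $\OO(P,\lambda)$ is constant on constant intervals, so that $F_\pi$ is the whole polyhedron. Your write-up merely reorders the steps and spells out the identification $\pi=\pi_x$ in slightly more detail.
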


\begin{proof}
    Let \(x\in\OO(P,\lambda)\) be a point constructed as in the proof of \Cref{prop:moppoint}.
    By construction we have \(x_p=x_q\) for \(p<q\) if and only if there are \(a,b\in P^*\) with \(a\le p<q\le b\) with \(\lambda(a)=\lambda(b)\).
    Thus, we conclude that \(\pi_x=\pi\) and \(\pi\) is a face partition of \(\OO(P,\lambda)\).
    Since every point of \(\OO(P,\lambda)\) satisfies \(x_a=x_p=x_b\) whenever \([a,b]\) is a constant interval containing \(p\), we conclude that \(F_\pi\) is indeed the whole polyhedron.
    Hence, \(\OO(P/\pi,\lambda/\pi)\cong F_\pi = \OO(P,\lambda)\), where \(\lambda/\pi\) is a strict marking by \Cref{prop:facepartitionproperties}.
\end{proof}

\begin{definition} \label{def:regular}
    Let \((P,\lambda)\) be a marked poset.
    A covering relation \(p\prec q\) is called \emph{non-redundant} if for all marked elements \(a,b\) satisfying \(a\le q\) and \(p\le b\), we have \(a=b\) or \(\lambda(a) < \lambda(b)\).
    Otherwise the covering relation is called \emph{redundant}.
    The marked poset \((P,\lambda)\) is called \emph{regular}, if all its covering relations are non-redundant.
\end{definition}

Apart from the desired correspondence of covering relations and facets, regularity of marked posets implies some useful properties of the marked poset itself.

\begin{proposition} \label{prop:regular}
    Let \((P,\lambda)\) be a regular marked poset.
    The following conditions are satisfied:
    \begin{enumerate}[nosep,label=\roman*)]
        \item the marking \(\lambda\) is strict,
        \item there are no covering relations between marked elements,
        \item every element in \(P\) covers and is covered by at most one marked element.
    \end{enumerate}
\end{proposition}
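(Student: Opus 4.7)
The plan is to extract each of the three conditions directly from the non-redundancy hypothesis, by applying it to a carefully chosen covering relation and substituting the two ``marked slots'' (one above \(q\), one below \(p\)) so that the escape clause \(a=b\) is ruled out. What remains is then a strict inequality between \(\lambda\)-values that either gives the desired strictness or contradicts order-preservation of \(\lambda\).

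For \emph{(i)}, given \(a<b\) in \(P^*\), I would fix a saturated chain from \(a\) to \(b\) and let \(a = p \prec q\) be its first step. Applying non-redundancy of \(p\prec q\) with the marked element \(a\) in the role of the element \(\le q\) (note \(a=p\le q\)) and \(b\) in the role of the element \(\ge p\) (note \(p\le a<b\)) yields \(a=b\) or \(\lambda(a)<\lambda(b)\); the former is excluded by hypothesis, so \(\lambda\) is strict.

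For \emph{(ii)}, if \(p\prec q\) were a covering between two marked elements, I would apply non-redundancy of this very covering with \(q\) as the marked element \(\le q\) (trivially) and \(p\) as the marked element \(\ge p\) (trivially). Since \(p\ne q\), this forces \(\lambda(q)<\lambda(p)\), contradicting the fact that \(\lambda\) is order-preserving on \(P^*\) and hence \(\lambda(p)\le\lambda(q)\). For \emph{(iii)}, suppose some \(p\in P\) covers two distinct marked elements \(a\ne b\). Both coverings \(a\prec p\) and \(b\prec p\) are non-redundant. Plugging the marked pair \((b,a)\) into the non-redundancy of \(a\prec p\) (using \(b\le p\) and \(a\le a\)) produces \(\lambda(b)<\lambda(a)\); plugging \((a,b)\) into the non-redundancy of \(b\prec p\) produces \(\lambda(a)<\lambda(b)\); contradiction. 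The ``is covered by at most one marked element'' half is the order-dual, using \(p\prec a\) and \(p\prec b\) with the marked element \(\le q\) taken in turn to be \(a\) and \(b\).

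The main obstacle is really only bookkeeping: the non-redundancy condition is quantified over a pair of marked elements with asymmetric constraints (\(a\le q\) and \(p\le b\)), so one has to pick the substitutions in the right order to exploit the escape clause \(a=b\). Once that is done, each item collapses to a single application of \Cref{def:regular} combined with the fact that every marking is an order-preserving map \(P^*\to\R\); no nontrivial geometric or combinatorial input is required.
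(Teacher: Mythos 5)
Your proposal is correct and follows essentially the same route as the paper: each item is obtained by a single application of the non-redundancy condition to a suitably chosen covering relation with the marked pair substituted so that the escape clause \(a=b\) is excluded, then contrasted with \(\lambda\) being order-preserving. The only cosmetic difference is that in (i) you take the first step of a saturated chain from \(a\) to \(b\), whereas the paper uses an arbitrary covering relation \(p\prec q\) with \(a\le p\prec q\le b\); both substitutions satisfy \(a\le q\) and \(p\le b\) equally well, and you also spell out the dual half of (iii) that the paper leaves implicit.
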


\begin{proof}
    \begin{enumerate}[nosep,label=\roman*)]
        \item When \(a<b\) are marked elements of \(P\), there is some covering relation \(p\prec q\) such that \(a\le p\prec q\le b\).
            Since \(a\le q\) and \(p\le b\), we have \(\lambda(a) < \lambda(b)\) by regularity.
        \item When \(b\prec a\) is a covering relation between marked elements, we have \(\lambda(a)<\lambda(b)\) by choosing \(p=b\), \(q=a\) in the regularity condition.
            This is a contradiction to \(\lambda\) being order-preserving.
        \item When \(a,b\prec q\) for marked \(a,b\), the regularity condition for \(a\le q\) and \(b\le b\) implies \(a=b\) or \(\lambda(a)<\lambda(b)\).
            By the same argument we get \(a=b\) or \(\lambda(b)<\lambda(a)\).
            We conclude that \(a=b\). \qedhere
    \end{enumerate}
\end{proof}

\begin{remark} \label{rem:counterex}
    The conditions in \Cref{prop:regular} are necessary, but not sufficient for \((P,\lambda)\) to be regular.
    The marked poset in \Cref{subfig:counterex-a} satisfies all three conditions, but the covering relation \(p\prec q\) is redundant.
    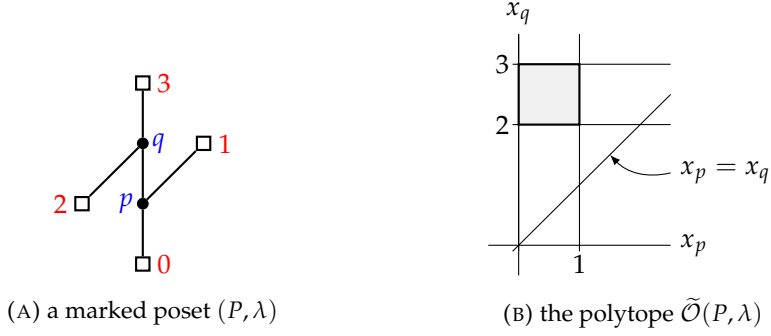
\begin{figure}
        \centering
        \subcaptionbox[]{a marked poset \((P,\lambda)\)\label{subfig:counterex-a}}[0.49\textwidth][c]{\centering
        \begin{tikzpicture}[baseline={([yshift=-.5ex]current bounding box.center)},scale=0.8]
            \path (0,0) node[posetelmm] (0) {} node[right=2pt,marking] {\(0\)};
            \path (0,1) node[posetelm] (x) {} node[left,elmname] {\(p\)};
            \path (0,2) node[posetelm] (y) {} node[right,elmname] {\(q\)};
            \path (0,3) node[posetelmm] (3) {} node[right=2pt,marking] {\(3\)};
            \path (-1,1) node[posetelmm] (2) {} node[left=2pt,marking] {\(2\)};
            \path (1,2) node[posetelmm] (1) {} node[right=2pt,marking] {\(1\)};
            \draw[covrel]
                  (0) -- (x) -- (y) -- (3)
                  (2) -- (y)
                  (x) -- (1);
        \end{tikzpicture}}
        \hfill
        \subcaptionbox[]{the polytope \(\wt\OO(P,\lambda)\)\label{subfig:counterex-b}}[0.49\textwidth][c]{\centering
        \begin{tikzpicture}[scale=.8,baseline={([yshift=-.5ex]current bounding box.center)}]
            \draw (-.5,0) -- (2.5,0) node[right] {\(x_p\)};
            \draw (0,-.5) -- (0,3.5) node[above] {\(x_q\)};
            \fill[black!6,draw=black,thick] (0,2) rectangle (1,3);
            \draw (0,-.1) -- (0,3.5);
            \draw (1,-.1) -- (1,3.5);
            \draw (-.1,-.1) -- (2.5,2.5);
            \draw (-.1,2) -- (2.5,2);
            \draw (-.1,3) -- (2.5,3);
            \draw[latex-] (1.5,1.5) to[bend right] (2.5,1.2) node [right] {\(x_p = x_q\)};
            \draw (1,0) node[below] {\(1\)};
            \draw (0,2) node[left] {\(2\)};
            \draw (0,3) node[left] {\(3\)};
        \end{tikzpicture}}
        \caption[Marked poset and associated polytope from \Cref{rem:counterex}]{The marked poset \((P,\lambda)\) from \Cref{rem:counterex} and the associated marked order polytope \(\wt\OO(P,\lambda)\). The covering relation \(p\prec q\) is redundant.}
        \label{fig:counterex}
    \end{figure}

    In fact, this example shows that the process described by Fourier in \cite[Section~3]{Fourier16} does not remove all redundant covering relations and hence leads to a notion of regularity that is not sufficient to have facets in correspondence with covering relations.

    The same marked poset also serves as a counterexample to the characterization of face partitions in \cite[Proposition~2.3]{JS14}.
    Instead of partitions of \(P\) in terms of blocks, they use subposets of \(P\) that have all the elements of \(P\) but only some of the relations.
    The connected components of the Hasse diagram of such a subposet \(G\) give a connected partition \(\pi_G\) of \(P\) and conversely every connected partition \(\pi\) defines a subposet \(G_\pi\) on the elements of \(P\) by having \(p\le q\) in \(G_\pi\) if and only if \(p\le q\) in \(P\) and \(p\) and \(q\) are in the same block of \(\pi\).
    When \((P,\lambda)\) is the marked poset in \Cref{fig:counterex} and \(G\) is the subposet with \(p\le q\) as the only non-reflexive relation---i.e., \(\{p,q\}\) is the only non-singleton block in \(\pi_G\)---the conditions in Proposition~2.3 of \cite{JS14} are satisfied but \(G\) does not yield a face of \(\OO(P,\lambda)\) as can be seen in \Cref{subfig:counterex-b}.
\end{remark}

\begin{theorem}
    Let \((P,\lambda)\) be a regular marked poset.
    The facets of \(\OO(P,\lambda)\) correspond to the covering relations in \((P,\lambda)\).
\end{theorem}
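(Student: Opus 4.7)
The plan is to establish a bijection between covering relations \(p\prec q\) of \((P,\lambda)\) and facets of \(\OO(P,\lambda)\) by assigning to each cover the partition \(\pi_{p,q}\) of \(P\) whose unique non-singleton block is \(\{p,q\}\). We verify via \Cref{thm:facepartitions} that \(\pi_{p,q}\) is a face partition and use \Cref{prop:mopface} together with the corollary on dimension to check that the corresponding face has codimension one.

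For the forward direction, connectedness of \(\pi_{p,q}\) is immediate from the cover \(p\prec q\). For \(P\)-compatibility, any cycle among the blocks would produce a chain \(x\le r_1\le\cdots\le r_m\le y\) in \(P\) with \(\{x,y\}\subseteq\{p,q\}\) and \(r_i\notin\{p,q\}\); each of the four cases for the endpoints is ruled out either by antisymmetry in \(P\) or by the fact that \(p\prec q\) is a cover. For \((P,\lambda)\)-compatibility and strictness of the induced marking, the case \(\{p,q\}\) free is handled by the original marking together with strictness of \(\lambda\) (\Cref{prop:regular}(i)); when \(\{p,q\}\) is non-free we invoke regularity on the cover \(p\prec q\) directly: whenever a marked singleton \(\{a\}\) is comparable with \(\{p,q\}\), plugging the two observed inequalities into the regularity condition yields either \(a=b\)---impossible since they lie in different blocks---or the strict inequality between their \(\lambda\)-values. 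A direct count using \Cref{prop:mopface} then yields \(\dim F_{\pi_{p,q}}=|\tilde\pi_{p,q}|=|\tilde P|-1=\dim\OO(P,\lambda)-1\), regardless of which of \(p,q\) are marked, so \(F_{\pi_{p,q}}\) is a facet.

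Injectivity of the assignment is clear, since distinct covers yield distinct non-singleton blocks. The main obstacle, where we expect most of the work, is surjectivity: every facet partition \(\pi\) must coincide with some \(\pi_{p,q}\). The key tool is the identity
\begin{equation*}
    |\tilde P|-|\tilde\pi| = \sum_{B\text{ free}}(|B|-1) + \sum_{B\text{ non-free}}|B\cap\tilde P|,
\end{equation*}
which equals \(1\) for a facet, so exactly one summand is \(1\) and the rest vanish. This splits into two cases: either (A) a unique free block of size two with every non-free block satisfying \(|B\cap\tilde P|=0\), or (B) a unique non-free block \(B\) with \(|B\cap\tilde P|=1\), with all other non-free blocks and all free blocks being singletons.

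Ruling out exotic structures in these two cases is where the regularity hypothesis earns its keep. In case (A), a non-free block consisting only of marked elements would have all its elements sharing one \(\lambda\)-value; by strictness they are pairwise incomparable, so no Hasse edges exist among them and such a block must be a singleton, reducing case (A) to a Hasse-connected free pair of unmarked elements---a cover. In case (B), writing \(B=\{r\}\cup M\) with \(r\) unmarked and \(M\subseteq P^*\), Hasse-connectedness forces every \(a\in M\) to be adjacent to \(r\); \Cref{prop:regular}(iii) then bounds \(|M|\le 2\), and \(|M|=2\) would make the two marked elements strictly comparable with identical \(\lambda\)-values, contradicting strictness. Hence \(|M|=1\) and \(B=\{r,a\}\) is a cover (necessarily between a marked and an unmarked element, consistent with \Cref{prop:regular}(ii)). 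In both cases \(\pi=\pi_{p,q}\) for a unique cover, completing the bijection.
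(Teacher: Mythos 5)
Your architecture matches the paper's: the forward direction verifies the three conditions of \Cref{thm:facepartitions} for the partition with unique non-singleton block \(\{p,q\}\), and your surjectivity argument is the paper's counting argument organized more transparently through the identity \(|\tilde P|-|\tilde\pi|=\sum_{B\ \text{free}}(|B|-1)+\sum_{B\ \text{non-free}}|B\cap \tilde P|\); your cases (A) and (B) are handled correctly using strictness, \Cref{prop:regular}, and the fact that blocks of connected \(P\)-compatible partitions are connected in the Hasse diagram.

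There is, however, a genuine gap in the forward direction, and it sits exactly at the point the notion of regularity is designed to address. When \(\{p,q\}\) is a free block, two marked singletons \(\{a\}\) and \(\{b\}\) can become comparable in \(P/\pi\) by chaining \emph{through} the free block: \(a\le q\) and \(p\le b\) force \(\{a\}\le\{p,q\\}\le\{b\}\) even when \(a\) and \(b\) are incomparable in \(P\). For such pairs, strictness of \(\lambda\) (your cited \Cref{prop:regular}(i)) says nothing, and \((P,\lambda)\)-compatibility together with strictness of \(\lambda/\pi\) requires precisely the non-redundancy of the cover \(p\prec q\). The marked poset of \Cref{rem:counterex} (\Cref{fig:counterex}) is strictly marked, its block \(\{p,q\}\) is free, and the partition with non-singleton block \(\{p,q\}\) fails to be \((P,\lambda)\)-compatible because \(a\le q\) and \(p\le b\) with \(\lambda(a)=2>1=\lambda(b)\)---so your free-case justification would wrongly certify that redundant cover as a facet. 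The repair is short: apply the regularity condition to every pair of marked elements with \(a\le q\) and \(p\le b\), exactly as you do in the non-free case (the paper does this uniformly, without splitting on whether \(\{p,q\}\) is free). A second, smaller slip: the claim that \(|\tilde\pi_{p,q}|=|\tilde P|-1\) holds ``regardless of which of \(p,q\) are marked'' fails if both are marked; you need \Cref{prop:regular}(ii) already in the forward direction to exclude covers between two marked elements.
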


\begin{proof}
    Since \((P,\lambda)\) is strictly marked, the dimension of \(\OO(P,\lambda)\) is equal to the number of unmarked elements in \(P\).
    Hence, a facet \(F\) corresponds to a \((P,\lambda)\)-compatible, connected partition \(\pi\) of \(P\) such that \(\lambda/\pi\) is strict and \(\pi\) has exactly \(|\tilde P|-1\) free blocks.
    We claim that the number of non-free blocks of \(\pi\) is \(|P^*|\).
    Assume there are marked elements \(a\neq b\) in a common block \(B\) of \(\pi\).
    Since \(\pi\) has \(|\tilde P|-1\) free blocks, at most one unmarked element can be in a non-free block.
    Since \((P,\lambda)\) is regular, there are no covering relations between marked elements.
    Hence, since \(B\) is connected as an induced subgraph of the Hasse diagram of \(P\) and contains both \(a\) and \(b\), it also contains the only unmarked element \(p\) in a non-free block, and we have one of the following four situations: \(a\prec p \prec b\),\, \(a \succ p \succ b\),\, \(a \prec p \succ b\) or \(a \succ p \prec b\).
    Since \(a\) and \(b\) are in the same block, they are identically marked and the first two possibilities contradict \(\lambda\) being strict.
    The other two possibilities contradict regularity, since \(p\) covers---or is covered by---more than one marked element.
    Hence, \(\pi\) has exactly \(|P^*|\) non-free blocks and we conclude that \(\pi\) has \(|P|-1\) blocks overall.
    Therefore, \(\pi\) consists of \(|P|-2\) singletons and a single connected 2-element block corresponding to a covering relation of \(P\).

    Conversely, let \(p\prec q\) be a covering relation of \(P\).
    We claim that the partition \(\pi\) with the only non-singleton block \(\{p,q\}\) is a face partition with \(|\tilde P|-1\) free blocks.
    Since \((P,\lambda)\) is regular, it contains no covering relation between marked elements and \(\pi\) has exactly \(|\tilde P|-1\) free blocks.
    Since \(\{p,q\}\) is the only non-singleton block and \(p\prec q\), the partition \(\pi\) is connected and \(P\)-compatible.
    To verify that \(\pi\) is \((P,\lambda)\)-compatible and \(\lambda/\pi\) is strict, let \(B, C\) be non-free blocks of \(\pi\) with \(a\in B\cap P^*\) and \(b\in C\cap P^*\) such that \(B\le C\).
    When \(B=C\), we have \(a=b\) and \(\lambda(a)=\lambda(b)\).
    When \(B<C\), we conclude \(a<b\) or \(a\le q\), \(p\le b\), since \(\{p,q\}\) is the only non-trivial block.
    In both cases, regularity implies \(\lambda(a)<\lambda(b)\).
\end{proof}

\begin{remark}
    In the case of usual order polytopes $\OO(P)$ every covering relation is non-redundant, since the only markings are $\lambda(\hat 0)=0$ and $\lambda(\hat 1)=1$.
    Hence, the notion of regularity is superfluous in the absence of a marking.
\end{remark}

Now that we established a regularity condition on marked posets that guarantees a bijection between covering relations in \(P\) and facets of the marked order polyhedron, we explain how to transform any given marked poset to a regular one.

\begin{proposition} \label{prop:regularize}
    Let \((P,\lambda)\) be a strictly marked poset.
    Redundant covering relations in \(P\) can be removed successively to obtain a regular marked poset \((P',\lambda)\) with the same associated marked order polyhedron \(\OO(P',\lambda)=\OO(P,\lambda)\).
\end{proposition}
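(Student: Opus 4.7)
The plan is to proceed by induction on the number of covering relations of \(P\), showing that a single redundant cover can always be removed without changing the polyhedron, and iterating. The main technical hurdle I expect is verifying that the ``redundancy witnesses'' for the removed cover remain valid in the reduced poset---this is precisely where strictness of \(\lambda\) enters the argument.

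First I would make precise what it means to remove a cover \(p\prec q\): let \(P_1\) be the poset on the same underlying set as \(P\) whose partial order is generated by all covering relations of \(P\) except \(p\prec q\). Since \(p\prec q\) is a cover, no chain from \(p\) to \(q\) in \(P\) avoids this step, so \(p\) and \(q\) are incomparable in \(P_1\). More generally, the relations \(r\le s\) lost in passing from \(P\) to \(P_1\) are exactly those for which every certifying chain in \(P\) used the cover \(p\prec q\); cutting such a chain at the removed step yields \(r\le p\) and \(q\le s\) in \(P_1\). Since the order of \(P_1\) is weaker than that of \(P\), the marking \(\lambda\) on the same set \(P^*\) remains order-preserving and strict with respect to the induced subposet structure on \(P^*\) inside \(P_1\).

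Next I would show that if \(p\prec q\) is redundant in \((P,\lambda)\), then \(\OO(P_1,\lambda)=\OO(P,\lambda)\). The inclusion \(\supseteq\) is immediate, since \(\OO(P_1,\lambda)\) is defined by a subset of the inequalities defining \(\OO(P,\lambda)\). For the other inclusion, take \(x\in\OO(P_1,\lambda)\) together with a relation \(r\le s\) in \(P\) that is lost in \(P_1\); by the previous observation, \(r\le p\) and \(q\le s\) in \(P_1\). Pick redundancy witnesses \(a,b\in P^*\) with \(a\le q\), \(p\le b\) in \(P\), \(a\ne b\), and \(\lambda(a)\ge\lambda(b)\). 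The key point---and the main obstacle---is that \(a\le q\) and \(p\le b\) still hold in \(P_1\). Suppose \(a\le q\) only via chains using \(p\prec q\); then \(a\le p\) in \(P_1\) (hence in \(P\)), so combined with \(p\le b\) in \(P\) one obtains \(a\le b\) in \(P\), and together with \(a\ne b\) and strictness of \(\lambda\) this yields \(\lambda(a)<\lambda(b)\), contradicting the choice of witnesses. The symmetric argument handles \(p\le b\). Chaining the inequalities in \(P_1\),
\[
x_r\le x_p\le x_b=\lambda(b)\le\lambda(a)=x_a\le x_q\le x_s,
\]
so \(x_r\le x_s\), establishing \(x\in\OO(P,\lambda)\).

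Finally I would iterate: so long as the current marked poset is not regular, it contains a redundant cover which may be removed as above. Termination is guaranteed because each removal strictly decreases the total number of order relations in the underlying finite poset. At termination no redundant cover remains, so by \Cref{def:regular} we have arrived at a regular marked poset with the original polyhedron.
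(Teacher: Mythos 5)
Your proof is correct and takes essentially the same route as the paper's: remove one redundant cover at a time, use strictness of \(\lambda\) to rule out \(a\le p\) (and \(q\le b\)) so that the witnesses \(a\le q\) and \(p\le b\) survive in the reduced poset, and chain \(x_p\le x_b=\lambda(b)\le\lambda(a)=x_a\le x_q\). The only cosmetic difference is that you verify every lost relation \(r\le s\) directly, whereas the paper checks only \(x_p\le x_q\) since all other covering relations of \(P\) remain relations of \(P'\).
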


\begin{proof}
    Let \(p\prec q\) be a redundant covering relation in \(P\).
    That is, there are marked elements \(a\neq b\) satisfying \(a\le q\), \(p\le b\) and \(\lambda(a)\ge \lambda(b)\).
    Let \(P'\) be obtained from \(P\) by removing the covering relation \(p\prec q\) from \(P\).
    Obviously \(\OO(P,\lambda)\) is contained in \(\OO(P',\lambda)\).
    
    Now let \(x\in\OO(P',\lambda)\).
    To verify that \(x\) is a point of \(\OO(P,\lambda)\), we have to show \(x_p\le x_q\).
    Since \(\lambda\) is a strict marking on \(P\), we can not have \(a\le p\).
    Otherwise \(a\le p\le b\) implies \(a<b\), in contradiction to \(\lambda(a)\ge \lambda(b)\).
    Hence, removing the covering relation \(p\prec q\) we still have \(a\le' q\) in \(P'\).
    By the same argument \(p \le' b\).
    Thus, by the defining conditions of \(\OO(P',\lambda)\), we have
    \begin{equation*}
        x_p \le x_b = \lambda(b) \le \lambda(a) = x_a \le x_q.
    \end{equation*}
    Therefore, \(x\in\OO(P,\lambda)\) and we conclude \(\OO(P',\lambda)=\OO(P,\lambda)\).
    This process can be repeated until all redundant covering relations have been removed, resulting in a regular marked poset defining the same marked order polyhedron.
\end{proof}

\begin{remark}
    Note that \Cref{prop:regularize} does not imply, that all covering relations that are redundant in \((P,\lambda)\) can be removed simultaneously.
    Removing a single redundant covering relation can lead to other redundant covering relations becoming non-redundant.
    In the marked poset
    \begin{equation*}
        \begin{tikzpicture}[xscale=.4,yscale=.8,baseline={([yshift=-.5ex]current bounding box.center)}]
            \path (-1,1) node[posetelmm] (1) {} node[left=2pt,marking] {\(1\)};
            \path (0,0) node[posetelm] (p) {} node[below,elmname] {\(p\)};
            \path (1,1) node[posetelmm] (1') {} node[right=2pt,marking] {\(1\)};
            \draw[covrel]
                  (1) -- (p) -- (1');
        \end{tikzpicture}
    \end{equation*}
    both covering relations are redundant.
    However, removing any of the two covering relations renders the remaining covering relation non-redundant.
\end{remark}

Given any marked poset \((P,\lambda)\), we can apply the constructions of \Cref{prop:strictify} and \Cref{prop:regularize} to obtain a regular marked poset \((P',\lambda')\) defining the same marked order polyhedron up to canonical affine isomorphism.

\section{Products, Minkowski Sums and Lattice Polyhedra}
\label{sec:mop-geometry}

In this section we study some convex geometric properties of marked order polyhedra.
We describe recession cones, a correspondence between disjoint unions of posets and products of polyhedra, characterize pointedness and use these results to obtain a Minkowski sum decomposition.
At the end of the section we show that marked posets with integral markings always give rise to lattice polyhedra.

\begin{proposition} \label{prop:rec}
    The recession cone of \(\OO(P,\lambda)\) is \(\OO(P,0)\), where \(0\colon P^*\to \R\) is the zero marking on the same domain as \(\lambda\).
\end{proposition}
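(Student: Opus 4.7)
The plan is to verify the claim by the standard characterization of the recession cone of a non-empty polyhedron: if $Q = \{x\in\R^n : Ax \le b,\ Cx = d\}$ is non-empty, then $\rec(Q) = \{y : Ay \le 0,\ Cy = 0\}$. Applied to the defining system of $\OO(P,\lambda)$ from \Cref{def:mop}, namely $x_q - x_p \ge 0$ for $p\le q$ and $x_a = \lambda(a)$ for $a\in P^*$, the right-hand sides simply vanish, leaving exactly the system defining $\OO(P,0)$. By \Cref{prop:moppoint}, $\OO(P,\lambda)$ is non-empty, so the formula applies.

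To make the argument concrete and avoid appealing to a black-box formula, I would prove the two inclusions directly. First, if $y\in\OO(P,0)$, then for any $x\in\OO(P,\lambda)$ and $t\ge 0$ the point $x+ty$ satisfies $(x+ty)_a = \lambda(a) + t\cdot 0 = \lambda(a)$ and $(x+ty)_q - (x+ty)_p = (x_q - x_p) + t(y_q - y_p) \ge 0$ for all $p\le q$, hence $x+ty\in\OO(P,\lambda)$. This shows $y\in\rec(\OO(P,\lambda))$.

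Conversely, fix some $x\in\OO(P,\lambda)$, which exists by \Cref{prop:moppoint}, and let $y\in\rec(\OO(P,\lambda))$, so $x+ty\in\OO(P,\lambda)$ for every $t\ge 0$. Evaluating the equality $(x+ty)_a = \lambda(a)$ at any $t>0$ forces $y_a = 0$ for all $a\in P^*$. Evaluating $(x+ty)_p \le (x+ty)_q$ at $t=1$ and subtracting $x_p \le x_q$ gives $y_p \le y_q$ for all $p\le q$. Thus $y\in\OO(P,0)$, completing the other inclusion.

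There is no real obstacle here; the statement is essentially a direct unpacking of the definitions, and the only subtlety worth flagging is that non-emptiness of $\OO(P,\lambda)$ is needed so that the recession cone is characterized by the homogenized system rather than being vacuous.
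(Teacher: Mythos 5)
Your first paragraph is exactly the paper's proof: the paper simply invokes the standard fact that for a non-empty polyhedron given by a linear system, the recession cone is cut out by the homogenized system, and observes that homogenizing the defining system of \(\OO(P,\lambda)\) yields precisely the defining system of \(\OO(P,0)\). So in approach you agree with the paper, and you add the useful remark (implicit in the paper) that non-emptiness, supplied by \Cref{prop:moppoint}, is what makes the formula applicable.

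One step in your hands-on verification of the converse inclusion is not valid as written: from \((x+y)_p \le (x+y)_q\) and \(x_p \le x_q\) you cannot ``subtract'' to conclude \(y_p \le y_q\) — inequalities pointing in the same direction do not subtract (e.g.\ \(x_p=0\), \(x_q=10\), \(y_p=5\), \(y_q=0\) satisfies both hypotheses but not the conclusion). The fix is immediate from what you already have: since \(x_p + t y_p \le x_q + t y_q\) for \emph{all} \(t\ge 0\), i.e.\ \(t(y_p - y_q) \le x_q - x_p\) for all \(t\ge 0\), having \(y_p > y_q\) would be contradicted by taking \(t\) large. With that repair the direct argument is complete; the black-box version in your first paragraph needs no repair at all.
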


\begin{proof}
    The recession cone of a polyhedron \(Q\subseteq\R^n\) defined by a system of linear inequalities \(Ax \ge b\) is given by \(Ax \ge 0\).
    Hence, replacing all constant terms in the description of \(\OO(P,\lambda)\) by zeros, we see that \(\rec(\OO(P,\lambda)) = \OO(P,0)\).
\end{proof}

\begin{proposition} \label{prop:products}
    Let \((P_1,\lambda_1)\) and \((P_2,\lambda_2)\) be marked posets on disjoint sets.
    Let the marking \(\lambda_1\sqcup\lambda_2\colon P_1^*\sqcup P_2^*\to\R\) on \(P_1\sqcup P_2\) be given by \(\lambda_1\) on \(P_1^*\) and \(\lambda_2\) on \(P_2^*\).
    The marked order polyhedron \(\OO(P_1\sqcup P_2, \lambda_1\sqcup\lambda_2)\) is equal to the product \(\OO(P_1,\lambda_1)\times\OO(P_2,\lambda_2)\) under the canonical identification \(\R^{P_1\sqcup P_2} = \R^{P_1}\times\R^{P_2}\).
\end{proposition}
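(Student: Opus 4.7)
The plan is to unpack both sides directly from \Cref{def:mop} and observe that the disjointness of $P_1$ and $P_2$ makes every defining condition of $\OO(P_1\sqcup P_2,\lambda_1\sqcup\lambda_2)$ involve coordinates from only one of the two factors, after which the claim is a purely set-theoretic rewrite.

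More concretely, first I would make explicit the partial order on the disjoint union $P_1\sqcup P_2$: for $p,q\in P_1\sqcup P_2$ we have $p\le q$ if and only if $p,q\in P_1$ with $p\le q$ in $P_1$, or $p,q\in P_2$ with $p\le q$ in $P_2$. In particular there are no comparabilities across the two components. Likewise $(P_1\sqcup P_2)^* = P_1^*\sqcup P_2^*$, and the marking $\lambda_1\sqcup\lambda_2$ restricts to $\lambda_i$ on $P_i^*$ by construction.

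Next, under the canonical identification $\R^{P_1\sqcup P_2}=\R^{P_1}\times\R^{P_2}$, write a point as a pair $(x^{(1)},x^{(2)})$. The inequalities $x_p\le x_q$ for $p\le q$ in $P_1\sqcup P_2$ split into two disjoint families, one purely in the $x^{(1)}$-coordinates (the inequalities of $\OO(P_1,\lambda_1)$) and one purely in the $x^{(2)}$-coordinates (the inequalities of $\OO(P_2,\lambda_2)$); similarly the marking equations $x_a=(\lambda_1\sqcup\lambda_2)(a)$ split into $x^{(1)}_a=\lambda_1(a)$ for $a\in P_1^*$ and $x^{(2)}_a=\lambda_2(a)$ for $a\in P_2^*$. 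Therefore $(x^{(1)},x^{(2)})$ satisfies all defining conditions of $\OO(P_1\sqcup P_2,\lambda_1\sqcup\lambda_2)$ if and only if $x^{(1)}\in\OO(P_1,\lambda_1)$ and $x^{(2)}\in\OO(P_2,\lambda_2)$, which is exactly the statement.

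There is no genuine obstacle here; the only thing one has to be careful about is to record that disjointness of $P_1$ and $P_2$ as \emph{sets} is what prevents any mixed inequality $x_p\le x_q$ with $p\in P_1$, $q\in P_2$ from appearing---without this, the order on $P_1\sqcup P_2$ would not be the coproduct in $\mathsf{Pos}$ and the factorization would fail.
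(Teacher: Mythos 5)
Your proposal is correct and follows essentially the same route as the paper's proof: both observe that the defining inequalities and equations of \(\OO(P_1\sqcup P_2,\lambda_1\sqcup\lambda_2)\) split into the two disjoint families defining \(\OO(P_1,\lambda_1)\) and \(\OO(P_2,\lambda_2)\), which is precisely the description of the product polyhedron. Your version merely spells out the role of disjointness (no cross-comparabilities) more explicitly than the paper does.
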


\begin{proof}
    The defining equations and inequalities of a product polyhedron \(Q_1\times Q_2\) in \(\R^{P_1}\times\R^{P_2}\) are obtained by imposing both the defining conditions of \(Q_1\) and \(Q_2\).
    In case of \(Q_1=\OO(P_1,\lambda_1)\) and \(Q_2=\OO(P_2,\lambda_2)\) these are exactly the defining conditions of \(\OO(P_1\sqcup P_2,\lambda_1\sqcup \lambda_2)\).
\end{proof}

Note that this relation between disjoint unions of marked posets and products of the associated marked order polyhedra may be expressed as the contravariant functor \(\OO\colon\mathsf{MPos}\to\mathsf{Polyh}\) sending coproducts to products.
\medskip

We now characterize marked posets whose associated polyhedra are \emph{pointed}
A pointed polyhedron is one that has at least one vertex, or equivalently does not contain a line.
The importance of pointedness lies in the fact that pointed polyhedra are determined by their vertices and recession cone.
To be precise, a pointed polyhedron is the Minkowski sum of its recession cone and the polytope obtained as the convex hull of its vertices.

\begin{proposition} \label{prop:pointed}
    A marked order polyhedron \(\OO(P,\lambda)\) is pointed if and only if each connected component of \(P\) contains a marked element.
\end{proposition}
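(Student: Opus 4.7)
The plan is to reduce pointedness of $\OO(P,\lambda)$ to a statement about its recession cone. A polyhedron is pointed if and only if it contains no line, equivalently if and only if its recession cone contains no line, i.e.\ the only $v$ with both $v$ and $-v$ in the cone is $v=0$. By \Cref{prop:rec}, $\rec(\OO(P,\lambda)) = \OO(P,0)$, so the question becomes whether $\OO(P,0)$ contains a nontrivial line.

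For the ``only if'' direction (by contrapositive), suppose some connected component $C$ of $P$ contains no marked element. I would propose the explicit witness $v = \mathbf{1}_C \in \R^P$, the indicator function of $C$. Any comparability $p \le q$ in $P$ has both endpoints in a single connected component, so the order constraint $v_p \le v_q$ reduces to either $0 \le 0$ or $1 \le 1$; and since $C$ contains no marked element, the marking constraint $v_a = 0$ for $a\in P^*$ holds vacuously on $C$ and by construction off $C$. Thus $\mathbf{1}_C \in \OO(P,0)$, and by the same argument $-\mathbf{1}_C \in \OO(P,0)$, so the entire line $\R\cdot\mathbf{1}_C$ lies in the recession cone and $\OO(P,\lambda)$ is not pointed.

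For the ``if'' direction, assume every connected component of $P$ contains a marked element and let $v, -v \in \OO(P,0)$. For each covering relation $p \prec q$ the inequalities $v_p \le v_q$ and $v_q \le v_p$ together force $v_p = v_q$, so $v$ is constant along covering relations and hence on every connected component of the Hasse diagram. Fixing a component $C$ and any marked element $a \in C \cap P^*$, the constraint $v_a = 0$ propagates to all of $C$; doing this in every component yields $v \equiv 0$.

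There is no real obstacle: the whole argument rides on \Cref{prop:rec} plus the fact that setting $\lambda \equiv 0$ preserves both the component structure and the vanishing at marked coordinates. The only item requiring care is choosing the correct line witness in the non-pointed case and verifying that comparabilities in $P$ never straddle two connected components, which is immediate.
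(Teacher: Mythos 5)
Your proof is correct, but it takes a genuinely different route from the paper's. You reduce pointedness to the triviality of the lineality space of the recession cone, using \Cref{prop:rec} to identify \(\rec(\OO(P,\lambda))=\OO(P,0)\), then exhibit the explicit line direction \(\mathbf{1}_C\) for an unmarked component \(C\) and show that \(v,-v\in\OO(P,0)\) forces \(v=0\) when every component meets \(P^*\); both verifications are sound, since comparabilities never straddle two components and \(v\) constant along covering relations is constant on each component. The paper instead decomposes \(\OO(P,\lambda)\) as a product over connected components via \Cref{prop:products}, reducing to the connected case, and then argues with vertices directly: by \Cref{prop:mopface} a vertex corresponds to a face partition with no free blocks, which forces a marked element in a nonempty connected poset, and conversely an explicit vertex is constructed by propagating the marked values through the Hasse diagram. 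Your argument is shorter and avoids the face-partition machinery entirely, at the cost of not producing an explicit vertex; the paper's version stays inside the combinatorial framework it has just built and hands you a concrete vertex whose coordinates lie in \(\lambda(P^*)\). One point you should make explicit: the equivalence ``pointed if and only if the recession cone contains no line'' is only valid for non-empty polyhedra, so you need to invoke \Cref{prop:moppoint} to guarantee \(\OO(P,\lambda)\neq\varnothing\); with that citation added your argument is complete.
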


\begin{proof}
    Let \(P_1,\dots,P_k\) be the connected components of \(P\) with \(\lambda_i=\lambda|_{P_i}\) the restricted markings.
    By inductively applying \Cref{prop:products}, we have a decomposition
    \begin{equation*}
        \OO(P,\lambda) = \OO(P_1,\lambda_1) \times \cdots \times \OO(P_k,\lambda_k).
    \end{equation*}
    Hence, \(\OO(P,\lambda)\) is pointed if and only if each \(\OO(P_i,\lambda_i)\) is pointed, reducing the statement to the case of \(P\) being connected.

    Let \((P,\lambda)\) be a connected marked poset and suppose \(v\in\OO(P,\lambda)\) is a vertex.
    By \Cref{prop:mopface} the corresponding partition \(\pi\) has no free blocks.
    Hence, either \(P\) is empty or it has at least as many marked elements as the number of blocks in \(\pi\).
    
    Conversely, if \(P\) is connected and contains marked elements, the following procedure yields a vertex \(v\) of \(\OO(P,\lambda)\):
    start by setting \(v_a = \lambda(a)\) for all \(a\in P^*\).
    Pick any \(p\in P\) such that \(v_p\) is not already determined and \(p\) is adjacent to some \(q\) in the Hasse-diagram of \(P\) with \(v_q\) already determined.
    Set \(v_p\) to be the maximum of all determined \(v_q\) with \(p\) covering \(q\) or the minimum of all determined \(v_q\) with \(p\) covered by \(q\).
    Continue until all \(v_p\) are determined.
    
    In each step, the defining conditions of \(\OO(P,\lambda)\) are respected and the procedure determines all \(v_p\) since \(P\) is connected and contains a marked element.
    By construction, each block of \(\pi_v\) will contain a marked element and thus \(v\) is a vertex by \Cref{prop:mopface}.
\end{proof}

\begin{proposition} \label{prop:minkowski-a}
Let \(\lambda_1, \lambda_2\colon P^* \to \R\) be markings on the same poset \(P\). The Minkowski sum \(\OO(P,\lambda_1)+\OO(P,\lambda_2)\) is contained in \(\OO(P,\lambda_1+\lambda_2)\), where \(\lambda_1+\lambda_2\) is the marking sending \(a\in P^*\) to \(\lambda_1(a)+\lambda_2(a)\).
\end{proposition}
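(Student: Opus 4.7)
The plan is to argue directly from \Cref{def:mop} by taking an arbitrary element of the Minkowski sum and verifying that it satisfies the two defining conditions of \(\OO(P,\lambda_1+\lambda_2)\): the marking condition at elements of \(P^*\) and the order-preservation inequalities for comparable pairs \(p\le q\).

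First, I would pick \(x \in \OO(P,\lambda_1) + \OO(P,\lambda_2)\), write \(x = x_1 + x_2\) with \(x_i \in \OO(P,\lambda_i)\), and check the marking condition: for any \(a \in P^*\),
\begin{equation*}
    x_a = (x_1)_a + (x_2)_a = \lambda_1(a) + \lambda_2(a) = (\lambda_1+\lambda_2)(a).
\end{equation*}
Next, for \(p \le q\) in \(P\), the inequalities \((x_1)_p \le (x_1)_q\) and \((x_2)_p \le (x_2)_q\) add termwise to give \(x_p \le x_q\). Both defining conditions hold, so \(x \in \OO(P,\lambda_1+\lambda_2)\).

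There is essentially no obstacle here: the content of the proposition is just that the two classes of defining constraints — affine equalities from markings and linear inequalities from the order — are each closed under Minkowski addition in the natural way, with the constant terms adding up to give the marking \(\lambda_1+\lambda_2\). The only subtlety worth flagging is that the inclusion is generally strict (the polyhedron on the right can contain points that are not decomposable as a sum from the left), which motivates the subsequent search for a proper Minkowski sum decomposition, but for the stated containment no further work is needed.
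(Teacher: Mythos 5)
Your argument is correct and is essentially identical to the paper's own proof: both take an arbitrary sum \(x_1+x_2\) with \(x_i\in\OO(P,\lambda_i)\) and verify the marking equalities and the order inequalities termwise. Nothing further is needed.
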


\begin{proof}
    Let \(x\in\OO(P,\lambda_1)\) and \(y\in\OO(P,\lambda_2)\).
    For any relation \(p\le q\) in \(P\) we have \(x_p\le x_q\) and \(y_p\le y_q\), hence \(x_p+y_p \le x_q+y_q\).
    For \(a\in P^*\) we have \(x_a+y_a=\lambda_1(a)+\lambda_2(a) = (\lambda_1+\lambda_2)(a)\).
    Thus, \(x+y\in\OO(P,\lambda_1+\lambda_2)\).
\end{proof}

We are now ready to give a Minkowski sum decomposition of marked order polyhedra, such that the marked posets associated to the summands have \(0\)-\(1\)-markings.
The decomposition is a generalization of \cite[Theorem~4]{Stanley02} and \cite[Corollary~2.10]{JS14}, where the bounded case with \(P^*\) being a chain in \(P\) is considered.

\begin{theorem} \label{thm:minkowski-b}
    Let \((P,\lambda)\) be a marked poset with \(P^*\neq\varnothing\) and \(\lambda(P^*) = \{ \, c_0, c_1, \dots, c_k \, \}\) with \(c_0 < c_1 < \cdots < c_k\).
    Let \(c_{-1}=0\) and define markings \(\lambda_i\colon P^*\to\R\) for \(i=0,\dots,k\) by
    \begin{equation*}
        \lambda_i(a) =
        \begin{cases}
            0 & \text{if \(\lambda(a)<c_i\)}, \\
            1 & \text{if \(\lambda(a)\ge c_i\).}
        \end{cases}
    \end{equation*}
    Then \(\OO(P,\lambda)\) decomposes as the weighted Minkowski sum
    \begin{equation*}
        \OO(P,\lambda) = \sum_{i=0}^k (c_i-c_{i-1}) \, \OO(P,\lambda_i).
    \end{equation*}
\end{theorem}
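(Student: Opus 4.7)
The plan is to prove the two inclusions separately.

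For the inclusion \(\sum_{i=0}^k (c_i - c_{i-1})\OO(P,\lambda_i) \subseteq \OO(P,\lambda)\), first check that each \(\lambda_i\) is a valid (order-preserving) marking: if \(a \leq b\) in \(P^*\) then \(\lambda(a) \leq \lambda(b)\), so \(\lambda_i(a) = 1\) forces \(\lambda_i(b) = 1\). Next, the pointwise identity \(\sum_{i=0}^{k} (c_i - c_{i-1})\lambda_i(a) = \lambda(a)\) holds by telescoping: writing \(\lambda(a) = c_j\), the terms with \(i \leq j\) each contribute \((c_i - c_{i-1})\) and sum to \(c_j - c_{-1} = c_j\). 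Hence for any choice of \(y^{(i)} \in \OO(P,\lambda_i)\), the combination \(\sum_i (c_i - c_{i-1}) y^{(i)}\) is order-preserving as a nonnegative linear combination of order-preserving functions and takes the value \(\lambda(a)\) on each \(a \in P^*\), so it lies in \(\OO(P,\lambda)\). Equivalently, one can iterate \Cref{prop:minkowski-a} together with the scaling identity \(c\cdot\OO(P,\mu) = \OO(P,c\mu)\) for \(c > 0\).

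The reverse inclusion is the substantive direction. Given \(x \in \OO(P,\lambda)\), I construct \(y^{(0)},\dots,y^{(k)}\) with \(y^{(i)} \in \OO(P,\lambda_i)\) and \(\sum_i (c_i - c_{i-1})y^{(i)} = x\) via an explicit layer-cake decomposition: each coordinate \(x_p\) is distributed across the \(k+1\) slabs \([c_{-1},c_0], [c_0,c_1], \dots, [c_{k-1}, c_k]\) in order. For interior layers, set
\[
y^{(i)}_p = \min\!\Bigl(1,\,\max\bigl(0,\,\tfrac{x_p - c_{i-1}}{c_i - c_{i-1}}\bigr)\Bigr) \qquad (0 < i < k).
\]
For \(i = 0\) remove the inner \(\max(0,\cdot)\) clamp (allowing \(y^{(0)}_p\) to be negative), for \(i = k\) remove the outer \(\min(1,\cdot)\) clamp (allowing \(y^{(k)}_p\) to exceed one), and if \(k = 0\) remove both clamps. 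Each \(y^{(i)}\) is a nondecreasing function of \(x_p\), so order-preservation of \(x\) lifts to every \(y^{(i)}\). If \(\lambda(a) = c_j\), then \((c_j - c_{i-1})/(c_i - c_{i-1}) \geq 1\) for \(i \leq j\) and \(\leq 0\) for \(i > j\), so \(y^{(i)}_a = \lambda_i(a)\) and indeed \(y^{(i)} \in \OO(P,\lambda_i)\).

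The identity \(\sum_i (c_i - c_{i-1}) y^{(i)}_p = x_p\) reduces to a case analysis on which slab contains \(x_p\): layers whose upper bound lies below \(x_p\) contribute their full width \((c_i - c_{i-1})\) and telescope to a boundary value; the layer containing \(x_p\) contributes the partial width \(x_p - c_{j-1}\); and layers above contribute zero. The main obstacle is calibrating the boundary behavior of the first and last layers: applying uniform clamping to every \(y^{(i)}\) would force each \(y^{(i)}_p \in [0,1]\), causing the sum to saturate at \(c_k\) (or bottom out at \(0\)) whenever \(x_p \notin [c_0, c_k]\). Leaving \(y^{(0)}\) unclamped below and \(y^{(k)}\) unclamped above is exactly what is needed to accommodate the (potentially unbounded) coordinates of \(x\) outside \([c_0, c_k]\), and this is what makes the decomposition work in the unbounded polyhedral setting rather than only for polytopes.
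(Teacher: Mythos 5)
Your proof is correct and takes a genuinely different route from the paper. The paper first reduces to the pointed case, where it suffices to decompose the vertices (whose face partitions have no free blocks, so their coordinates lie in \(\lambda(P^*)\)) and to observe that all summands share the recession cone \(\OO(P,0)\); the non-pointed case is then handled separately by splitting off the unmarked connected components as a product factor via \Cref{prop:products}. You instead decompose an \emph{arbitrary} point \(x\in\OO(P,\lambda)\) coordinatewise by the explicit piecewise-linear layer-cake formula, which I have checked: each \(y^{(i)}\) is a nondecreasing function of \(x_p\) (hence order-preserving), hits the right marking values, and the weighted sum telescopes to \(x_p\) in every slab, with the unclamped ends of layers \(0\) and \(k\) absorbing coordinates outside \([c_0,c_k]\). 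What your approach buys is a self-contained, constructive argument that avoids the pointed/non-pointed case distinction and the appeal to the decomposition of a pointed polyhedron into vertices plus recession cone; what the paper's approach buys is that it reuses the machinery of face partitions and products already developed, and makes visible that the summands of a vertex are themselves vertices of the \(\OO(P,\lambda_i)\). One caveat, shared with the paper's own proof: your formula \(y^{(0)}_p=\min(1,x_p/c_0)\) presupposes \(c_0>0\), and indeed the theorem as stated silently requires \(c_0>c_{-1}=0\) (for \(c_0\le 0\) the weight \(c_0-c_{-1}\) is nonpositive and the claimed identity can fail already for a single marked element covering an unmarked one), so this is an implicit hypothesis rather than a gap in your argument.
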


\begin{proof}
    Since
    \begin{equation*}
        \lambda = c_0 \lambda_0 + (c_1-c_0) \lambda_1 + \cdots + (c_k - c_{k-1}) \lambda_k
    \end{equation*}
    and in general \(\OO(P,c\lambda)=c\,\OO(P,\lambda)\), one inclusion follows immediately from \Cref{prop:minkowski-a}.
    For the other inclusion, first assume that \(\OO(P,\lambda)\) is pointed.
    In this case, it is enough to consider vertices and the recession cone.
    Since the underlying posets and sets of marked elements agree for all polytopes in consideration, they all have the same recession cone \(\OO(P,0)\) by \Cref{prop:rec}.
    Let \(v\in \OO(P,\lambda)\) be a vertex.
    The associated face partition \(\pi\) has no free blocks and on each block \(v\) takes some constant value in \(\lambda(P^*)\).
    For fixed \(i\in\{0,\dots,k\}\) we enumerate the blocks of \(\pi\) where \(v\) takes constant value \(c_i\) by \(B_{i,1},\dots,B_{i,r_i}\).
    For a block \(B\in\pi\) denote by \(w_B = \sum_{p\in B} e_p \in \R^P\) the labeling of \(P\) with all entries in \(B\) equal to \(1\), all other entries equal to \(0\).
    This yields a description of \(v\) as
    \begin{equation*}
        v = \sum_{i=0}^k c_i \sum_{j=1}^{r_i} w_{B_{i,j}}.
    \end{equation*}
    For \(i=0,\dots,k\) define points \(v^{(i)}\in\R^P\) by
    \begin{equation*}
        v^{(i)} = (c_i - c_{i-1}) \sum_{l=i}^k \sum_{j=1}^{r_l} w_{B_{l,j}}.
    \end{equation*}
    This gives a decomposition of \(v\) as \(v^{(0)}+\cdots+v^{(k)}\).
    It remains to be checked that each \(v^{(i)}\) is a point in the corresponding Minkowski summand.
    Since \(v^{(0)}\) is just constant \(c_0\) on the whole poset and \(\lambda_0\) is the marking of all ones, we have \(v^{(0)}\in c_0\,\OO(P,\lambda_0)\).
    Fix \(i\in\{1,\dots,k\}\).
    For \(p\le q\) we have \(v_p\le v_q\) and thus \(p\in B_{i,j}\), \(q\in B_{i',j'}\) for \(i\le i'\) by the chosen enumeration of blocks.
    Hence, by definition of \(v^{(i)}\), the inequality \(v^{(i)}_p \le v^{(i)}_q\) is equivalent to one of the three inequalities \(0\le 0\), \(0\le c_i-c_{i-1}\) or \(c_i - c_{i-1} \le c_i - c_{i-1}\), all being true.
    The marking conditions of \(\OO(P,(c_i-c_{i-1})\lambda_i)\) are satisfied by \(v^{(i)}\) as well, so \(v^{(i)}\in (c_i-c_{i-1})\,\OO(P,\lambda_i)\).
    We conclude that
    \begin{equation*}
        v = \sum_{i=1}^k v^{(i)} \enskip\in\enskip \sum_{i=0}^k (c_i-c_{i-1})\,\OO(P,\lambda_i)
    \end{equation*}
    for each vertex \(v\) of \(\OO(P,\lambda)\).
    Hence, the proof is finished for the case of \(\OO(P,\lambda)\) being pointed.

    When \(\OO(P,\lambda)\) is not pointed, we can decompose \(P=P'\sqcup P''\) where \(P'\) consists of all connected components without marked elements and \(P''\) consists of all other components.
    Letting \(\lambda'\) and \(\lambda''\) be the respective restrictions of \(\lambda\), we have \(\OO(P,\lambda)=\OO(P',\lambda')\times\OO(P'',\lambda'')\) by \Cref{prop:products}, where \(\OO(P',\lambda')\) is not pointed while \(\OO(P'',\lambda'')\) is, by \Cref{prop:pointed}.
    Applying the previous result to \(\OO(P'',\lambda'')\) we obtain
    \begin{align*}
        \OO(P,\lambda) &= \OO(P',\lambda') \times \left( \sum_{i=0}^k (c_i-c_{i-1})\,\OO(P'',\lambda''_i) \right).
    \end{align*}
    Since \(P'\) contains no marked elements, it is equal to its recession cone and we have
    \begin{equation*}
        \OO(P',\lambda') = \sum_{i=0}^k \OO(P',\lambda').
    \end{equation*}
    Therefore, using the identity \(\sum_{i=0}^k P_i \times \sum_{i=0}^k Q_i  = \sum_{i=0}^k (P_i\times Q_i)\) for products of Minkowski sums, we obtain
    \begin{align*}
        \OO(P,\lambda) &= \left( \sum_{i=0}^k \OO(P',\lambda') \right) \times \left( \sum_{i=0}^k (c_i-c_{i-1})\,\OO(P'',\lambda''_i) \right) \\
        &= \sum_{i=0}^k \left( \OO(P',\lambda') \times \OO(P'',(c_i-c_{i-1}) \lambda''_i) \right) \\
        &= \sum_{i=0}^k \OO(P'\sqcup P'', \lambda'\sqcup (c_i-c_{i-1}) \lambda''_i).
    \end{align*}
    Since \(P'\) did non contain any markings that could be affected by scaling, the factors \((c_i-c_{i-1})\) can be put as dilation factors in front of the polyhedra.
    Again, since \(P'\) is unmarked, we have \(\lambda'\sqcup\lambda''_i = \lambda_i\) and \(P'\sqcup P''=P\), so we obtain the desired Minkowski sum decomposition.
\end{proof}

\begin{remark}
    When \(\OO(P,\lambda)\) is a polytope, \(\OO(P,1)\) is just a point and the marked poset polytopes \(\OO(P,\lambda_i)\) appearing in the Minkowski sum decomposition of \Cref{thm:minkowski-b} may all be expressed as ordinary poset polytopes as discussed by Stanley \cite{Stanley86} and Geissinger \cite{Geissinger81} by contracting constant intervals and dropping redundant conditions.
\end{remark}

\begin{example}
    We apply the Minkowski sum decomposition of \Cref{thm:minkowski-b} to the marked order polytope \((P,\lambda)\) from \Cref{ex:pentagon}.
    Since \(\lambda(P^*) = \{0,1,3,4\}\) in this example, we obtain the four new markings \(\lambda_0\), \(\lambda_1\), \(\lambda_2\) and \(\lambda_3\) given by
    \begin{equation*}
        \begin{tikzpicture}[scale=.6,baseline={([yshift=-.5ex]current bounding box.center)}]]
            \path (0,0) node[posetelmm] (0) {} node[right=2pt,marking] {\(1\)};
            \path (0,1) node[posetelm] (x) {} node[left,elmname] {\(p\)};
            \path (0,2) node[posetelm] (y) {} node[right,elmname] {\(q\)};
            \path (0,3) node[posetelmm] (4) {} node[right=2pt,marking] {\(1\)};
            \path (-1,1) node[posetelmm] (1) {} node[left=2pt,marking] {\(1\)};
            \path (1,2) node[posetelmm] (3) {} node[right=2pt,marking] {\(1\)};
            \draw[covrel]
                  (0) -- (x) -- (y) -- (4)
                  (1) -- (y)
                  (x) -- (3);
        \end{tikzpicture},\quad
        \begin{tikzpicture}[scale=.6,baseline={([yshift=-.5ex]current bounding box.center)}]]
            \path (0,0) node[posetelmm] (0) {} node[right=2pt,marking] {\(0\)};
            \path (0,1) node[posetelm] (x) {} node[left,elmname] {\(p\)};
            \path (0,2) node[posetelm] (y) {} node[right,elmname] {\(q\)};
            \path (0,3) node[posetelmm] (4) {} node[right=2pt,marking] {\(1\)};
            \path (-1,1) node[posetelmm] (1) {} node[left=2pt,marking] {\(1\)};
            \path (1,2) node[posetelmm] (3) {} node[right=2pt,marking] {\(1\)};
            \draw[covrel]
                  (0) -- (x) -- (y) -- (4)
                  (1) -- (y)
                  (x) -- (3);
        \end{tikzpicture},\quad
        \begin{tikzpicture}[scale=.6,baseline={([yshift=-.5ex]current bounding box.center)}]]
            \path (0,0) node[posetelmm] (0) {} node[right=2pt,marking] {\(0\)};
            \path (0,1) node[posetelm] (x) {} node[left,elmname] {\(p\)};
            \path (0,2) node[posetelm] (y) {} node[right,elmname] {\(q\)};
            \path (0,3) node[posetelmm] (4) {} node[right=2pt,marking] {\(1\)};
            \path (-1,1) node[posetelmm] (1) {} node[left=2pt,marking] {\(0\)};
            \path (1,2) node[posetelmm] (3) {} node[right=2pt,marking] {\(1\)};
            \draw[covrel]
                  (0) -- (x) -- (y) -- (4)
                  (1) -- (y)
                  (x) -- (3);
        \end{tikzpicture}\quad\text{and}\quad
        \begin{tikzpicture}[scale=.6,baseline={([yshift=-.5ex]current bounding box.center)}]]
            \path (0,0) node[posetelmm] (0) {} node[right=2pt,marking] {\(0\)};
            \path (0,1) node[posetelm] (x) {} node[left,elmname] {\(p\)};
            \path (0,2) node[posetelm] (y) {} node[right,elmname] {\(q\)};
            \path (0,3) node[posetelmm] (4) {} node[right=2pt,marking] {\(1\)};
            \path (-1,1) node[posetelmm] (1) {} node[left=2pt,marking] {\(0\)};
            \path (1,2) node[posetelmm] (3) {} node[right=2pt,marking] {\(0\)};
            \draw[covrel]
                  (0) -- (x) -- (y) -- (4)
                  (1) -- (y)
                  (x) -- (3);
        \end{tikzpicture},
    \end{equation*}
    respectively.
    The associated marked order polytopes and their weighted Minkowski sum are
    \begin{equation*}
        \tikzset{el/.style={draw, fill, circle, minimum size=3pt, inner sep=0, outer sep=0}}
        0\,
        \begin{tikzpicture}[scale=.5,baseline={([yshift=-.5ex]current bounding box.center)}]

            \draw (-.5,0) -- (1.5,0);
            \draw (0,-.5) -- (0,1.5);
            \node[el] at (1,1) {};
        \end{tikzpicture} \enskip+\enskip
        1\,
        \begin{tikzpicture}[scale=.5,baseline={([yshift=-.5ex]current bounding box.center)}]

            \draw (-.5,0) -- (1.5,0);
            \draw (0,-.5) -- (0,1.5);
            \draw (0,1) node[el] {} [thick] -- (1,1) node[el] {};
        \end{tikzpicture} \enskip+\enskip
        2\,
        \begin{tikzpicture}[scale=.5,baseline={([yshift=-.5ex]current bounding box.center)}]

            \draw (-.5,0) -- (1.5,0);
            \draw (0,-.5) -- (0,1.5);
            \fill[black!6,draw=black,thick] (0,0) -- (1,1) -- (0,1) -- cycle;
        \end{tikzpicture} \enskip+\enskip
        1\,
        \begin{tikzpicture}[scale=.5,baseline={([yshift=-.5ex]current bounding box.center)}]

            \draw (-.5,0) -- (1.5,0);
            \draw (0,-.5) -- (0,1.5);
            \draw (0,0) node[el] {} [thick] -- (0,1) node[el] {};
        \end{tikzpicture} \enskip=\enskip
        \begin{tikzpicture}[scale=.5,baseline={([yshift=-.5ex]current bounding box.center)}]

            \draw (-.5,0) -- (3.5,0);
            \draw (0,-.5) -- (0,4.5);
            \fill[black!6,draw=black,thick] (0,1) -- (1,1) -- (3,3) -- (3,4) -- (0,4) -- cycle;
        \end{tikzpicture}. \qedhere
    \end{equation*}
\end{example}

We finish this section by considering marked posets with integral markings.
When all markings on a poset \(P\) are integral, we will see that \(\OO(P,\lambda)\) is a lattice polyhedron.
A simple fact about lattice polyhedra we will need is that products of lattice polyhedra are lattice polyhedra.
This is an immediate consequence of the Minkowski sum identity \((Q+R)\times(Q'+R')=(Q\times Q')+(R\times R')\) we already used, together with products of lattice polytopes being lattice polytopes and products of rational cones being rational cones.

\begin{proposition} \label{prop:latticepoly}
    Let \((P,\lambda)\) be a marked poset such that \(\lambda(P^*)\subseteq \Z\).
    Then the marked order polyhedron \(\OO(P,\lambda)\) is a lattice polyhedron.
\end{proposition}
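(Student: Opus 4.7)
The plan is to reduce to the case of a single connected component via \Cref{prop:products}, since products of lattice polyhedra are lattice polyhedra (as noted just before the statement). So write $P = P_1 \sqcup \cdots \sqcup P_m$ for the decomposition into connected components and let $\lambda_i$ be the restriction of $\lambda$ to $P_i^* = P^* \cap P_i$. Then $\OO(P,\lambda) = \OO(P_1,\lambda_1) \times \cdots \times \OO(P_m,\lambda_m)$, and it suffices to show each factor is a lattice polyhedron. Henceforth assume $P$ is connected.

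If $P^* = \varnothing$, then $\OO(P,\lambda) = \OO(P,0) = \rec(\OO(P,\lambda))$ is exactly the cone cut out of $\R^P$ by the inequalities $x_p \le x_q$ for $p \le q$. Since this system has integer coefficients, Minkowski--Weyl yields a finite set of lattice vectors generating the cone, so it is a lattice polyhedron (a lattice polytope, namely the origin, plus a rational cone generated by lattice vectors).

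If $P^* \neq \varnothing$, then by \Cref{prop:pointed} the polyhedron $\OO(P,\lambda)$ is pointed, and so it decomposes as $\mathrm{conv}(V) + \rec(\OO(P,\lambda))$, where $V$ is its finite vertex set. The recession cone equals $\OO(P,0)$ by \Cref{prop:rec}, which is a rational cone with lattice generators by the previous paragraph. It remains to show $V \subseteq \Z^P$. For any vertex $v$, \Cref{prop:mopface} gives $\dim F_v = |\tilde\pi_v| = 0$, so $\pi_v$ has no free blocks; thus every block $B \in \pi_v$ contains at least one marked element $a \in P^* \cap B$. Since $v$ is constant on $B$ with value $\lambda(a) \in \Z$, we conclude $v_p \in \Z$ for every $p \in P$. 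Therefore $\mathrm{conv}(V)$ is a lattice polytope, and $\OO(P,\lambda)$ is a lattice polyhedron.

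There is no serious obstacle: the argument is a clean assembly of \Cref{prop:products}, \Cref{prop:pointed}, \Cref{prop:rec}, and \Cref{prop:mopface}, together with Minkowski--Weyl for rational cones. The only point that requires a moment's thought is that a pointed polyhedron is the Minkowski sum of the convex hull of its vertices and its recession cone, which is standard.
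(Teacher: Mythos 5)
Your proof is correct and follows essentially the same route as the paper: reduce via \Cref{prop:products} to components, handle the pointed part by noting that vertex partitions have no free blocks (so vertices lie in $\lambda(P^*)\subseteq\Z$) and that the recession cone $\OO(P,0)$ is rational, and treat the unmarked components as rational cones. The only cosmetic difference is that you split into all connected components individually, whereas the paper groups them into the unmarked ones and the rest.
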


\begin{proof}
    When \(\OO(P,\lambda)\) is pointed, it is enough to show that all vertices are lattice points and the recession cone is rational.
    By \Cref{prop:mopface} the face partitions associated to vertices have no free blocks.
    Hence, all coordinates are contained in \(\lambda(P^*)\), so vertices are lattice points.
    The recession cone is obtained as \(\OO(P,0)\) by \Cref{prop:rec}, which is a rational polyhedral cone.

    If \(\OO(P,\lambda)\) is not pointed, we use the decomposition \(P=P'\sqcup P''\) of \(P\) into unmarked connected components in \(P'\) and the other components in \(P''\).
    As in the previous proof, we obtain a product decomposition \(\OO(P,\lambda)=\OO(P',\lambda')\times\OO(P'',\lambda'')\) by \Cref{prop:products}.
    Since \(P'^*\) is empty we know that \(\OO(P',\lambda')\) is a rational polyhedral cone.
    Since all connected components of \(P''\) contain marked elements, we know that \(\OO(P'',\lambda'')\) is pointed and hence a lattice polyhedron by the previous argument.
    We conclude that \(\OO(P,\lambda)\) is a lattice polyhedron.
\end{proof}

\section{Conditional Marked Order Polyhedra}
\label{sec:cmop}

In this section we study intersections of marked order polyhedra with affine subspaces.
We describe an affine subspace \(U\) of \(\R^P\) by a linear map \(s\colon \R^P\to \R^k\) and a vector \(b\in\R^k\), such that \(U=s^{-1}(b)\).
Hence, \(U\) is the space of solutions to the linear system \(s(x)=b\).

\begin{definition}
    Given a marked poset \((P,\lambda)\), a linear map \(s\colon \R^P\to\R^k\) and \(b\in\R^k\), we define the \emph{conditional marked order polyhedron} \(\OO(P,\lambda,s,b)\) as the intersection \(\OO(P,\lambda)\cap s^{-1}(b)\).
\end{definition}

The faces of \(\OO(P,\lambda,s,b)\) correspond to the faces of \(\OO(P,\lambda)\) whose relative interior meets \(s^{-1}(b)\).
Hence, they are also given by face partitions.
However, given a face partition \(\pi\) of \(\OO(P,\lambda)\), deciding whether it is a face partition of \(\OO(P,\lambda,s,b)\) can not be done combinatorially in general.
The problem is in determining whether the linear system \(s(x)=b\) admits a solution in the relative interior of \(F_\pi\).
We come back to this issue later in the section.
Still, given a point \(x\in\OO(P,\lambda,s,b)\), we obtain a face partition \(\pi_x\) and we can find the dimension of \(F_x\subseteq \OO(P,\lambda,s,b)\) by calculating a kernel of a linear map associated to \(\pi_x\).

Given a partition \(\pi\) of \(P\), we define the linear injection \(r_\pi\colon\R^{\tilde\pi}\to\R^P\) by
\begin{equation*}
    r_\pi(z)_p =
    \begin{cases}
        z_B & \text{if \(p\) is an element of the free block \(B\in\tilde\pi\),} \\
        0 & \text{otherwise.}
    \end{cases}
\end{equation*}
We can describe \(r_\pi\) as taking a labeling \(z\) of the free blocks of \(\pi\) with real numbers and making it into a labeling of \(P\) with real numbers, by putting the values given by \(z\) on elements in free blocks, while labeling elements in non-free blocks with zero.
If \(\pi\) is a face partition of \(\OO(P,\lambda)\), we have seen in the proof of \Cref{prop:mopface} that the affine hull of \(F_\pi\subseteq\OO(P,\lambda)\) is a translation of \(\im (r_\pi)\).
The following proposition is a generalization of this observation to conditional marked order polyhedra.

\begin{proposition} \label{prop:cmopface}
Let \(x\) be a point of \(\OO(P,\lambda,s,b)\) with associated face partition \(\pi=\pi_x\).
Let \(U\) be the linear subspace of \(\R^P\) parallel to the affine hull of the face \(F_x\subseteq\OO(P,\lambda,s,b)\).
The map \(r_\pi\) restricts to an isomorphism \(\ker (s\circ r_\pi)\isoto U\).
In particular, the dimension of \(F_x\) is the same as the dimension of \(\ker (s\circ r_\pi)\).
\end{proposition}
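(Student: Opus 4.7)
The plan is to reduce the statement to an analysis of $F_\pi$ as a face of the unconditional polyhedron $\OO(P,\lambda)$. First I would recall, essentially from the proof of \Cref{prop:mopface}, that the affine hull of the minimal face $F_\pi\subseteq\OO(P,\lambda)$ containing $x$ is the translate $x + \im(r_\pi)$: a point $y\in\R^P$ lies in $\aff(F_\pi)$ if and only if $y$ is constant on the blocks of $\pi$ and satisfies $y_a = \lambda(a)$ for all $a\in P^*$, which is exactly the condition $y - x \in \im(r_\pi)$, because $x$ itself satisfies these equalities. Similarly, $s^{-1}(b) - x = \ker(s)$ since $s(x)=b$.

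Next I would identify the face $F_x$ of the conditional polyhedron. Since $F_x$ is the smallest face of $\OO(P,\lambda,s,b)=\OO(P,\lambda)\cap s^{-1}(b)$ containing $x$, and every face of a polyhedral intersection of this form is the intersection of a face of $\OO(P,\lambda)$ with $s^{-1}(b)$, we have $F_x = F_\pi \cap s^{-1}(b)$. Taking affine hulls and translating by $-x$, this yields the inclusion
\begin{equation*}
    U \;\subseteq\; \bigl(\aff(F_\pi)-x\bigr) \cap \bigl(s^{-1}(b)-x\bigr) \;=\; \im(r_\pi) \cap \ker(s).
\end{equation*}

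For the reverse inclusion, I would use the fact that $x$ lies in the \emph{relative interior} of $F_\pi$ as a face of $\OO(P,\lambda)$. Given any $v \in \im(r_\pi) \cap \ker(s)$, relative interiority produces an $\epsilon>0$ with $x \pm \epsilon v \in F_\pi$, while $v\in\ker(s)$ gives $s(x\pm\epsilon v)=b$. Hence $x\pm\epsilon v \in F_\pi \cap s^{-1}(b) \subseteq \OO(P,\lambda,s,b)$, and these two points lie in the minimal face $F_x$ containing $x$ on a line through $x$. This forces $v \in U$ and establishes $U = \im(r_\pi)\cap\ker(s)$. Finally, since $r_\pi$ is injective, a vector $w\in\R^{\tilde\pi}$ satisfies $r_\pi(w)\in\ker(s)$ iff $w\in\ker(s\circ r_\pi)$, so $r_\pi$ restricts to a linear isomorphism $\ker(s\circ r_\pi)\isoto \im(r_\pi)\cap\ker(s)=U$, and the dimension statement is immediate.

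The main obstacle is the reverse inclusion $U\supseteq \im(r_\pi)\cap\ker(s)$: it is essential that $x$ belongs to the relative interior of $F_\pi$ in the \emph{unconditional} polyhedron (not merely to the relative interior of $F_x$ in the conditional one), since only this guarantees the freedom to perturb $x$ in every direction of $\im(r_\pi)$ while remaining inside $\OO(P,\lambda)$. Once this is noted the rest of the argument is formal bookkeeping with the translations by $x$ and the injectivity of $r_\pi$.
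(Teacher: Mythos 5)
Your proof is correct and follows essentially the same route as the paper: reduce to the minimal face $F'_x$ of the unconditional polyhedron, identify $U = \im(r_\pi)\cap\ker(s)$, and conclude via injectivity of $r_\pi$. You are in fact more careful than the paper on the one delicate step---the paper simply asserts $\aff(F_x)=\aff(F'_x)\cap s^{-1}(b)$, whereas your relative-interior perturbation argument explicitly justifies the inclusion $\im(r_\pi)\cap\ker(s)\subseteq U$, which is precisely where such an equality could fail for a general intersection of affine sets.
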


\begin{proof}
    Let \(F'_x\) be the minimal face of \(\OO(P,\lambda)\) containing \(x\), so that \(F_x=F'_x\cap s^{-1}(b)\).
    For the affine hulls we also have \(\aff(F_x) = \aff(F'_x) \cap s^{-1}(b)\).
    Letting \(U'\) be the linear subspace parallel to \(\aff(F'_x)\), just as \(U\) is the linear subspace parallel to \(\aff(F_x)\), we obtain
    \begin{equation*}
        U = U' \cap \ker(s) = \im(r_\pi) \cap \ker(s),
    \end{equation*}
    since \(\ker(s)\) is the linear subspace parallel to \(s^{-1}(b)\).
    This description implies that \(r_\pi\) restricts to an isomorphism \(\ker (s\circ r_\pi)\isoto U\).
\end{proof}

\begin{remark}
    In the special case of Gelfand--Tsetlin polytopes with linear conditions given by a weight \(\mu\), this result appeared in \cite{DM04} in terms of tiling matrices associated to points in the polytope.
    The tiling matrix is exactly the matrix associated to the linear map \(s\circ r_\pi\).
\end{remark}

\begin{example} \label{ex:cmop}
    Let \((P,\lambda)\) be the linear marked poset
    \begin{equation*}
        \tikz[baseline=-3.5pt]{\node[marking] {\(0\)};}
        \prec
        \textcolor{blue}{p}
        \prec
        \textcolor{blue}{q}
        \prec
        \textcolor{blue}{r}
        \prec
        \textcolor{blue}{s}
        \prec
        \tikz[baseline=-3.5pt]{\node[marking] {\(5\)};}
    \end{equation*}
    and impose the linear conditions \(x_p+x_r=4\) and \(x_q+x_s=6\) on \(\OO(P,\lambda)\).
    We describe these conditions by intersecting with \(s^{-1}(b)\) for the linear map \(s\colon \R^P\to \R^2\) given by \(s(x)=(x_p+x_r, x_q+x_s)\) and \(b=(4,6)\).
    Any point in \(\OO(P,\lambda,s,b)\) is determined by \(x_p\) and \(x_q\), so we can picture the polytope in \(\R^2\).
    Expressing the five inequalities in terms of \(x_p\), \(x_q\) using the linear conditions, we obtain
    \begin{equation*}
        0\le x_p, \enskip
        x_p \le x_q, \enskip
        x_q \le 4-x_p, \enskip
        x_q \le 2+x_p, \enskip
        1 \le x_q.
    \end{equation*}
    The resulting polytope in \(\R^{\{p,q\}}\cong\R^2\) is illustrated in \Cref{fig:Lam52}.
    \begin{figure}
        \centering
        \begin{tikzpicture}[scale=.6]
            \tikzset{el/.style={draw, fill, circle, minimum size=3pt, inner sep=0, outer sep=0}}
            \draw (-.5,0) -- (3.5,0) node [right] {\(x_p\)};
            \draw (0,-.5) -- (0,3.5) node [above] {\(x_q\)};
            \draw (0.5,3.5) -- (3.5,0.5);
            \draw (-.1,1) -- (3.5,1);
            \draw (-.1,-.1) -- (3.5,3.5);
            \draw (-.1,1.9) -- (1.5,3.5);
            \fill[black!6,draw=black,thick] (0,1) -- (1,1) -- (2,2) -- (1,3) -- (0,2) -- cycle;
            \draw (1,-.1) -- (1,.1);
            \draw (1,0) node[below] {\(1\)};
            \draw (2,-.1) -- (2,.1);
            \draw (2,0) node[below] {\(2\)};
            \draw (0,1) node[left] {\(1\)};
            \draw (0,2) node[left] {\(2\)};
            \draw (-.1,3) -- (.1,3);
            \draw (0,3) node[left] {\(3\)};
            \draw (1,2) node[el] {} node[left] {\(u\)};
            \draw (1.5,2.5) node[el] {} node[right] {\(v\)};
            \draw (2,2) node[el] {} node[right] {\(w\)};
        \end{tikzpicture}
        \caption[Conditional marked order polytope with three points]{The conditional marked order polytope \(\OO(P,\lambda,s,b)\) from \Cref{ex:cmop} together with three points on faces of different dimensions.}
        \label{fig:Lam52}
    \end{figure}
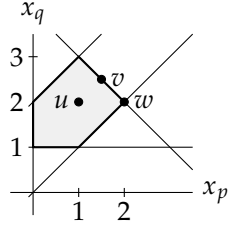

    We want to calculate the dimensions of the minimal faces of \(\OO(P,\lambda,s,b)\) containing the points \(u=(1,2)\), \(v=(1.5, 2.5)\) and \(w=(2,2)\) in \(\R^2\).
    In \(\R^P\) these points and their associated partitions of \(P\) are
    \begin{equation*}
        \textcolor{red}{0} \mid
        \textcolor{blue}{1} \mid
        \textcolor{blue}{2} \mid
        \textcolor{blue}{3} \mid
        \textcolor{blue}{4} \mid
        \textcolor{red}{5}, \enskip
        \textcolor{red}{0} \mid
        \textcolor{blue}{1.5} \mid
        \textcolor{blue}{2.5} \enskip
        \textcolor{blue}{2.5} \mid
        \textcolor{blue}{3.5} \mid
        \textcolor{red}{5}, \enskip\text{and}\enskip
        \textcolor{red}{0} \mid
        \textcolor{blue}{2} \enskip
        \textcolor{blue}{2} \enskip
        \textcolor{blue}{2} \mid
        \textcolor{blue}{4} \mid
        \textcolor{red}{5}.
    \end{equation*}
    Hence, we have \(4\), \(3\) and \(2\) free blocks, respectively.
    The associated linear maps \(s\circ r_{\pi}\) can be represented by the matrices
    \begin{equation*}
        \begin{pmatrix}
            1 & 0 & 1 & 0 \\
            0 & 1 & 0 & 1
        \end{pmatrix}, 
        \begin{pmatrix}
            1 & 1 & 0 \\
            0 & 1 & 1
        \end{pmatrix}, \enskip\text{and}\enskip
        \begin{pmatrix}
            2 & 0 \\
            1 & 1
        \end{pmatrix}, 
    \end{equation*}
    respectively.
    The kernels of these maps have dimension \(2\), \(1\) and \(0\) corresponding to the dimensions of the minimal faces containing \(u\), \(v\) and \(w\) as one can see in \Cref{fig:Lam52}.
\end{example}

Given any \((P,\lambda)\)-compatible partition of \(P\), we obtained a polyhedron \(F'_\pi\) contained in \(\OO(P,\lambda)\) in the previous section.
Hence, we have a polyhedron \(F_\pi\) contained in \(\OO(P,\lambda,s,b)\) given by \(F_\pi = F'_\pi \cap s^{-1}(b)\).
As in the unconditional case, these polyhedra are canonically affine isomorphic to conditional marked order polyhedra given by the quotient \((P/\pi,\lambda/\pi)\).

\begin{proposition}
    Let \((P,\lambda)\) be a marked poset, \(\pi\) a \((P,\lambda)\)-compatible partition, \(s\colon\R^P\to\R^k\) a linear map and \(b\in\R^k\).
    Define \(s/\pi\) to be the composition \(s\circ q^*\), where \(q^*\) is the inclusion \(\R^{P/\pi}\hookrightarrow \R^P\) induced by the quotient map of marked posets.
    The polyhedron \(F_\pi\subseteq\OO(P,\lambda,s,b)\) is affinely isomorphic to the conditional marked order polyhedron \(\OO(P/\pi, \lambda/\pi, s/\pi, b)\) via the canonical isomorphism obtained by restricting \(q^*\).
\end{proposition}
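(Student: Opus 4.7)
The plan is to reduce the statement to the unconditional version already established in \Cref{prop:submop}, which furnishes a canonical affine isomorphism \(q^*\colon \OO(P/\pi,\lambda/\pi) \isoto F'_\pi\), where \(F'_\pi\subseteq\OO(P,\lambda)\) denotes the subpolyhedron of points constant on the blocks of \(\pi\). Since by definition \(F_\pi = F'_\pi\cap s^{-1}(b)\), the proposition reduces to identifying the preimage of \(F_\pi\) under this affine isomorphism. Note that the symbol \(q^*\) is used here both for the linear pullback \(\R^{P/\pi}\hookrightarrow\R^P\) along the quotient map \(q\) and for the affine isomorphism of \Cref{prop:submop}; by construction they agree on the domain \(\OO(P/\pi,\lambda/\pi)\).

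The key computation is then a one-line translation using the very definition \(s/\pi = s\circ q^*\). For any \(z\in\OO(P/\pi,\lambda/\pi)\) I would record the chain of equivalences
\[
    q^*(z)\in s^{-1}(b) \iff s(q^*(z)) = b \iff (s/\pi)(z) = b \iff z\in (s/\pi)^{-1}(b).
\]
From this the preimage under \(q^*\) of \(F_\pi = F'_\pi \cap s^{-1}(b)\) equals \(\OO(P/\pi,\lambda/\pi)\cap (s/\pi)^{-1}(b)\), which is by definition the conditional marked order polyhedron \(\OO(P/\pi,\lambda/\pi,s/\pi,b)\). Restricting \(q^*\) to this preimage then produces an affine isomorphism onto \(F_\pi\), which is the canonical isomorphism asserted.

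The argument is essentially formal bookkeeping on top of \Cref{prop:submop} and the definition of \(s/\pi\), so I do not anticipate a real obstacle. The only point that genuinely requires care is to make precise that \(q^*\) in the formula \(s/\pi = s\circ q^*\) is literally the same linear map whose restriction produces the affine isomorphism of \Cref{prop:submop}; once that identification is explicit, the chain of equivalences above completes the proof.
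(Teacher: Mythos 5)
Your proposal is correct and follows essentially the same route as the paper: both reduce to the unconditional isomorphism of \Cref{prop:submop} and then translate the condition \(s(x)=b\) through \(q^*\) using the definition \(s/\pi=s\circ q^*\) (you phrase this as a preimage computation, the paper as an image computation via injectivity of \(q^*\), which is the same bookkeeping). No gaps.
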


\begin{proof}
    By definition, \(F_\pi\) is the intersection of the face \(F'_\pi\) of \(\OO(P,\lambda)\) with \(s^{-1}(b)\).
    We know that \(q^*\) restricts to an affine isomorphism \(\OO(P/\pi,\lambda/\pi)\isoto F'_\pi\).
    Hence, \(F_\pi\) is contained in the image of \(q^*\) as well and we have
    \begin{equation*}
        F_\pi = F'_\pi \cap s^{-1}(b) = F'_\pi \cap \im q^* \cap s^{-1}(b) = F'_\pi \cap q^*( (s\circ q^*)^{-1}(b)).
    \end{equation*}
    We may write \(F'_\pi\) as \(q^*(\OO(P/\pi,\lambda/\pi))\) and use injectivity of \(q^*\) to obtain
    \begin{equation*}
        F_\pi = q^*(\OO(P/\pi,\lambda/\pi))\cap q^*( (s/\pi)^{-1}(b) ) = q^*(\OO(P/\pi,\lambda/\pi)\cap (s/\pi)^{-1}(b)).
    \end{equation*}
    By definition of conditional marked order polyhedra, this is just the injective image of \(\OO(P/\pi,\lambda/\pi,s/\pi,b)\) under \(q^*\), which finishes the proof.
\end{proof}

When \(F\) is a non-empty face of \(\OO(P/\pi,\lambda/\pi,s/\pi,b)\) we have an associated partition \(\pi=\pi_F\), so that \(F=F_\pi\).
Thus, we obtain the same corollary on faces of conditional marked order polyhedra as in the unconditional case.

\begin{corollary}
    For every non-empty face \(F\) of a conditional marked order polyhedron \(\OO(P,\lambda,s,b)\) we have a canonical affine isomorphism
    \begin{equation*}
        \pushQED{\qed}
        \OO(P/\pi_F, \lambda/\pi_F, s/\pi_F, b) \cong F. \qedhere
        \popQED
    \end{equation*}
\end{corollary}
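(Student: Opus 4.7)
The strategy is to reduce the corollary to the preceding proposition by showing that every non-empty face $F$ of $\OO(P,\lambda,s,b)$ has the form $F_{\pi_F}$ for a $(P,\lambda)$-compatible partition $\pi_F$, so that the proposition directly supplies the desired isomorphism.

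First I would pick any point $x$ in the relative interior of $F$, which by definition gives $\pi_F = \pi_x$. By \Cref{prop:mopface}, the minimal face of the unconditional polyhedron $\OO(P,\lambda)$ containing $x$ is the polyhedron $F'_{\pi_x}$, and by \Cref{prop:facepartitionproperties} the partition $\pi_x$ is $(P,\lambda)$-compatible. The section already uses the general slicing principle that faces of $\OO(P,\lambda)\cap s^{-1}(b)$ correspond to those faces of $\OO(P,\lambda)$ whose relative interior meets $s^{-1}(b)$, obtained as their intersections with this affine subspace; in particular the minimal face of $\OO(P,\lambda,s,b)$ through $x$ is $F'_{\pi_x}\cap s^{-1}(b) = F_{\pi_F}$. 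Since $F$ is that minimal face, I conclude $F = F_{\pi_F}$.

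Applying the preceding proposition to the $(P,\lambda)$-compatible partition $\pi_F$ then produces the canonical affine isomorphism
\[
\OO(P/\pi_F,\,\lambda/\pi_F,\,s/\pi_F,\,b) \;\cong\; F_{\pi_F} \;=\; F,
\]
which is the content of the corollary. There is no serious obstacle: the only step with any mathematical content is the identification of the minimal face of the slice as the slice of the minimal face, which is a routine consequence of $\OO(P,\lambda,s,b)$ being defined by adjoining the linear equations $s(x)=b$ to the inequalities and equations defining $\OO(P,\lambda)$, and of $x$ lying in the relative interior of $F'_{\pi_x}$ and in $s^{-1}(b)$ simultaneously.
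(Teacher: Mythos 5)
Your proof is correct and follows essentially the same route as the paper, which treats the corollary as immediate from the preceding proposition together with the observation (stated just before the corollary and at the start of the section) that every non-empty face $F$ of $\OO(P,\lambda,s,b)$ arises as $F'_{\pi_F}\cap s^{-1}(b)=F_{\pi_F}$ for the face partition $\pi_F=\pi_x$ of a relative interior point $x$. Your explicit justification that the minimal face of the slice is the slice of the minimal face is exactly the content the paper leaves implicit.
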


The next proposition will allow us to consider \emph{any} polyhedron as a conditional marked order polyhedron up to affine isomorphism.
Thus, there is little hope to understand general conditional marked order polyhedra any better than we understand polyhedra in general.

\begin{proposition} \label{prop:allcmops}
    Every polyhedron is affinely isomorphic to a conditional marked order polyhedron.
\end{proposition}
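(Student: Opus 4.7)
The plan is to realize an arbitrary polyhedron \(Q\subseteq\R^n\) by introducing slack variables that convert every inequality into an equality paired with a nonnegativity constraint, and then to encode those nonnegativity constraints with a trivial marked poset while using the linear conditions to impose the equalities. First, write \(Q=\{x\in\R^n : Ax\le b,\ Cx=d\}\) for some matrices \(A\in\R^{m\times n}\), \(C\in\R^{\ell\times n}\) and vectors \(b\in\R^m\), \(d\in\R^\ell\); such a description exists since every polyhedron is the solution set of a finite system of weak linear inequalities and equations.

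Next, I would construct a marked poset \((P,\lambda)\) as a disjoint union of two kinds of components. Take \(n\) pairwise incomparable unmarked elements \(p_1,\dots,p_n\), one for each coordinate of \(\R^n\), and for each \(j=1,\dots,m\) attach a two-element chain \(a_j\prec q_j\) in which \(a_j\) is marked with \(\lambda(a_j)=0\) and \(q_j\) is unmarked. Under the identification \(\R^P\cong\R^n\times\R^m\) sending \(x\) to \((x_{p_i})_i\times(x_{q_j})_j\), the defining conditions of \(\OO(P,\lambda)\) reduce to \(x_{q_j}\ge 0\) for every \(j\), so that
\begin{equation*}
\OO(P,\lambda)=\R^n\times\R_{\ge 0}^m.
\end{equation*}

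Now define a linear map \(s\colon\R^P\to\R^{m+\ell}\) by
\begin{equation*}
s(x,y)=(Ax+y,\,Cx),
\end{equation*}
and set \(b'=(b,d)\in\R^{m+\ell}\). Then the conditional marked order polyhedron is
\begin{equation*}
\OO(P,\lambda,s,b')=\bigl\{(x,y)\in\R^n\times\R^m_{\ge 0} : Ax+y=b,\ Cx=d\bigr\}.
\end{equation*}
I would finish by showing that the coordinate projection \(\varphi\colon(x,y)\mapsto x\) restricts to an affine isomorphism onto \(Q\). Surjectivity and inverse are immediate: for \(x\in Q\) the slack \(y=b-Ax\) is automatically nonnegative because \(Ax\le b\), and the map \(x\mapsto(x,b-Ax)\) is affine and serves as the inverse of \(\varphi\). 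Both maps are affine and mutually inverse, so the two polyhedra are affinely isomorphic.

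There is no real obstacle in this argument; the work lies entirely in choosing a poset simple enough that \(\OO(P,\lambda)\) becomes a product of a Euclidean space with an orthant, so that the remaining linear constraints can carry all of the combinatorial information of \(Q\). The upshot, as noted in the paper, is that the class of conditional marked order polyhedra is as wild as the class of all polyhedra, so structural results analogous to those of the previous sections should not be expected without further hypotheses on \(s\) and \(b\).
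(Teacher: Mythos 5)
Your proposal is correct and follows essentially the same route as the paper: introduce one unmarked slack element per inequality, mark a neighbouring element with \(0\) so that \(\OO(P,\lambda)\) becomes a product of a Euclidean space with an orthant, shift the inequalities into the linear conditions via the slacks, and project onto the \(p\)-coordinates. The only cosmetic difference is that the paper uses a single marked element \(r\) covering all slack elements (so the slacks are nonpositive and get subtracted in \(s\)), whereas you attach a separate marked minimum to each slack element (so they are nonnegative and get added); both yield the same affine isomorphism.
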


\begin{proof}
    Let \(Q\subseteq \R^n\) be a polyhedron given by linear equations and inequalities
    \begin{align*}
        \sum_{i=1}^n a_{ki} x_i &= c_k &\text{for \(k\)}&=1,\dots,s, \\
        \sum_{i=1}^n b_{li} x_i &\le d_l &\text{for \(l\)}&=1,\dots,t.
    \end{align*}
    Define a poset \(P\) with ground set \(\{p_1,\dots,p_n,q_1,\dots,q_t,r\}\) and covering relations \(q_l \prec r\) for \(l=1,\dots,t\).
    Define a marking on \(P^*=\{r\}\) by \(\lambda(r)=0\).
    The marked poset obtained this way is depicted in \Cref{fig:helpmop}.
    Let the linear system \(s(x)=b\) for \(x\in \R^P\) be given by
    \begin{align*}
        \sum_{i=1}^n a_{ki} x_{p_i} &= c_k &\text{for \(k\)}&=1,\dots,s, \\
        \sum_{i=1}^n b_{li} x_{p_i} - x_{q_l} &= d_l &\text{for \(l\)}&=1,\dots,t.
    \end{align*}
    The conditional marked order polyhedron \(\OO(P,\lambda,s,b)\) is affinely isomorphic to \(Q\) by the map \(\OO(P,\lambda,s,b)\to Q\) sending \(x\in\R^P\) to \((x_{p_1},\dots,x_{p_n})\in\R^n\).
    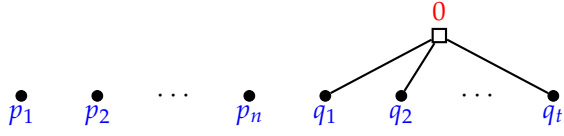
\begin{figure}
        \centering
        \begin{tikzpicture}[yscale=.8]
            \draw (0,0) node[posetelm] {} node[below, elmname] {\(p_1\)}
                  (1,0) node[posetelm] {} node[below, elmname] {\(p_2\)}
                  (2,0) node {\(\cdots\)}
                  (3,0) node[posetelm] {} node[below, elmname] {\(p_n\)}
                  (4,0) node[posetelm] (q1) {} node[below, elmname] {\(q_1\)}
                  (5,0) node[posetelm] (q2) {} node[below, elmname] {\(q_2\)}
                  (6,0) node {\(\cdots\)}
                  (7,0) node[posetelm] (qt) {} node[below, elmname] {\(q_t\)}
                  (5.5,1) node[posetelmm] (0) {} node[above=2pt,marking] {\(0\)};
              \draw[covrel] (q1) -- (0) (q2) -- (0) (qt) -- (0);
        \end{tikzpicture}
        \caption[Marked poset from proof of \Cref{prop:allcmops}]{The marked poset constructed in the proof of \Cref{prop:allcmops}.}
        \label{fig:helpmop}
    \end{figure}
\end{proof}

We may now come back to the question of when a face partition \(\pi\) of \((P,\lambda)\) still corresponds to a face of \(\OO(P,\lambda,s,b)\).
As discussed at the beginning of this section, we have to decide whether \(s(x)=b\) admits a solution in the relative interior of the face \(F'_\pi\) of \(\OO(P,\lambda)\), that is, \(\relint(F'_\pi) \cap s^{-1}(b) \neq \varnothing\).
Using the affine isomorphism induced by the quotient map this is equivalent to
\begin{equation*}
    \relint\left( \OO(P/\pi, \lambda/\pi) \right) \cap (s/\pi)^{-1}(b) \neq \varnothing.
\end{equation*}
Hence, we reduced the problem to deciding whether a linear system \(s(x)=b\) admits a solution in the relative interior of a marked order polyhedron \(\OO(P,\lambda)\).
However, even deciding whether \(s(x)=b\) admits any solution in \(\OO(P,\lambda)\) is equivalent to deciding whether \(\OO(P,\lambda,s,b)\) is non-empty, which is in general just as hard as determining whether an arbitrary system of linear equations and linear inequalities admits a solution by \Cref{prop:allcmops}.

We conclude that the concept of conditional marked order polyhedra is too general to obtain meaningful results.
Still, in special cases the structure of an underlying poset and faces still corresponding to a subset of face partitions might be useful.
An interesting class of conditional marked order polyhedra might consist of those, where \(P\) is connected and conditions are given by fixing sums along disjoint subsets of \(P\), as is the case for Gelfand--Tsetlin polytopes with weight conditions.

\printbibliography

\end{document}